\newcommand{\BibTeX}{{\scshape Bib}\kern-.08em\TeX}
\newcommand{\T}{\S\kern .15em\relax }
\newcommand{\AMS}{$\mathcal{A}$\kern-.1667em\lower.5ex\hbox
        {$\mathcal{M}$}\kern-.125em$\mathcal{S}$}
\newtheorem{lemma}{Lemma}
\newtheorem{remark}{Remark}
\newtheorem{theorem}{Theorem}
\newtheorem{definition}{Definition}
\newtheorem{corollary}{Corrolary}
\title[Weak Solutions to an Euler Alignment System]{Weak Solutions \\ to an Euler Alignment System \\ with Singular Interactions \\ in a Bounded Domain}
\date {\today}
\author{Amoolya Tirumalai}
\address{Department of Electrical and Computer Engineering \& Institute for Systems Research, University of Maryland, 8223 Paint Branch Dr, College Park, MD 20740, USA. }
\email{ast256@umd.edu}
\author{Christos N. Mavridis}
\address{Department of Electrical and Computer Engineering \& Institute for Systems Research, University of Maryland, 8223 Paint Branch Dr, College Park, MD 20740, USA. }
\email{mavridis@umd.edu}
\author{John S. Baras}
\address{Department of Electrical and Computer Engineering \& Institute for Systems Research, University of Maryland, 8223 Paint Branch Dr, College Park, MD 20740, USA. }
\email{baras@umd.edu}
\keywords{flocking, Cucker-Smale model, leaders, ODE-PDE systems, Bessel potential}
\begin{document}
\def\smfbyname{}

\begin{abstract}
Euler alignment systems appear as hydrodynamic limits of interacting self-propelled particle systems such as the (generalized) Cucker-Smale model. In this work, we study weak solutions to an Euler alignment system on smooth, bounded, connected domains. This particular Euler alignment system includes 
singular alignment, attraction, and repulsion interaction kernels which correspond to a Yukawa potential. We also include a confinement potential and self-propulsion. We embed the problem into an abstract Euler system to conclude that infinitely many weak solutions exist. We further show that we can construct solutions satisfying bounds on an energy quantity, and that the solutions satisfy a weak-strong uniqueness principle. Finally, we present an addition of leader-agents governed by controlled ODEs, and modification of the interactions to be Bessel potentials of fractional order $s > 2$.
\end{abstract}
\thanks{This work was supported in part by the US
Defense Advanced Research Projects Agency (DARPA) under Agreement No. HR00111990027, and by the US Office of Naval Research (ONR) Grant No. N00014-17-1-2622. The work herein expresses the views of the authors.}
\maketitle
\tableofcontents

\section{Introduction}
In the last two decades, considerable interest has been shown in models of collective behavior 
across a variety of scientific fields, including traffic, ecology, and robotics. A particular area of this interest is in using insights gained from biological
flocking and swarming to construct controls for large teams of UAVs \cite{degond2013hierarchy,lopez2012behavioural,
	ballerini2008empirical, sepulchre2007stabilization,vicsek2008universal, mavridis2020detection, mavridis2020learning, mavridis2022learning, tirumalai2022robust, tirumalai2023approximate}. 

There are a number of well-studied models of collective motion, including the Boids model, the Cucker-Smale model, and the Vicsek model \cite{reynolds1987flocks, cucker2007emergent, vicsek1995novel}. These three models can be seen as representatives of efforts by computer scientists, mathematicians/ control 
theorists, and physicists, respectively, to describe the same phenomenon .

Analysis of large systems of agents presents many difficulties, and passage to continuum limits alleviates some of them while introducing others. There is substantial interest in kinetic and hydrodynamic (continuum) limits of interacting particle systems \cite{albi2019vehicular, bellomo2017quest, figalli2018rigorous, 
	carrillo2017review, carrillo2010particle}. 

We are interested in formal hydrodynamic limits of particle dynamics subject to velocity alignment, attraction, repulsion, self-propulsion, and containment forces in bounded, smooth regions. Similar particle systems arise in design of controls for autonomous vehicles or UAVs using virtual forces \cite{mavridis2020detection, leonard2001virtual}. 

Alignment and cohesion provide useful team-like properties to the dynamics. It is desirable to construct virtual forces between UAVs which prevent collision between them, hence the repulsion is included. Collision between a UAV and obstacles or confinement to a desired region is enforced by a boundary on their dynamics and encouraged by a confinement potential. The boundary need not be physical for the UAVs: it can be a virtual boundary designed to enforce safety constraints. The self-propulsion is added to counteract the dissipation introduced by the velocity alignment term.

We first study a problem on bounded region $D \subset \mathbb R^3$ with the Yukawa potential providing alignment, attraction, and repulsion. Then, we incorporate leaders into our model, converting these PDEs into a coupled ODE-PDE system. Finally, we discuss Bessel potentials of fractional order $s > 2$.

\subsection{Related Work}

There is existing work on hydrodynamic limits of particle dynamics with fairly 
regular nonlocal alignment and attraction-repulsion kernels over the torus \cite{weak_sols}. 
We change some of the assumptions of this work.  We work with singular alignment, 
attraction, and repulsion kernels and a bounded domain possibly containing obstacles.  Most work on Euler alignment systems with singular interactions focus on the Riesz potential \cite{poyato2017euler, shvydkoy2018eulerian} or a similar mildly singular kernel \cite{an2020global}. \cite{shvydkoy2018eulerian, an2020global} focus on the 1D model in the whole space $\mathbb R$ with singular alignment force only, and obtain global existence of classical solutions.

We modify the work of \cite{weak_sols, feireisl2015weak} using some additions from \cite{donatelli2015well, chiodaroli2016existence}, and, of course, some of our own contributions to develop our approach to show weak solutions exist to our problem. We describe these now.

\cite{weak_sols} additionally deals with an alignment system subject to entropic pressure. In this work, we do not consider pressure. Briefly, we are interested in the Euler alignment system as a limit of particle dynamics. As noted in \cite{weak_sols, carrillo2010particle}, pressure terms appear in hydrodynamic limits of Cucker-Smale particle dynamics where stochastic elastic collisions occur. Slightly different closure assumptions are required, and this results in an Euler alignment system which includes an additional equation for the temperature.

This work is motivated by the need to develop a robust theory for a particular application to system identification problems \cite{mavridis2022learning} in three spatial dimensions \cite{tirumalai2023learning}. Because of this motivation, we are not interested in the problem including pressure and temperature - the Euler alignment system we study already has many interesting features that have not been studied together. It is also for the application in \cite{tirumalai2023learning} that we focus on three spatial dimensions.

\subsection{Approach}

It is known that the compressible Euler equations admit very irregular and ``wild'' solutions \cite{de_lel_weak} which are constructed via convex integration. Convex integration methods are extremely general, and so many problems can be embedded into this framework with some careful modification where necessary \cite{feireisl2015weak, donatelli2015well, chiodaroli2016existence}. In particular, and somewhat counter-intuitively, one is able to embed compressible systems which might include nonlocal interactions \cite{weak_sols} or temperature \cite{donatelli2015well,feireisl2015weak} into abstract \textit{incompressible} systems \cite{feireisl2015weak} in a completely justified and rigorous way. For intuition, see some of the examples in \cite{feireisl2015weak}.

In this work, we show existence of infinitely many weak solutions of our Euler 
alignment system on a bounded, smooth domain. We then show that these solutions satisfy an energy inequality, and are hence bounded-energy weak solutions. We use this energy inequality to 
prove a weak-strong uniqueness principle.

\subsection{Contribution}

To the best of our knowledge, this is one of the first works on this type of 3D Euler alignment system in bounded domains with obstacles. Many other works on similar models are posed on the free space or the torus \cite{ha2014hydrodynamic, tadmor2014critical, shu2020flocking}. In addition, we include a wide variety of interactions in our Euler alignment system, local and non-local, including alignment, attraction, repulsion, obstacles, and self-propulsion. Most work on the Euler alignment system and similar systems focus on a subset of these \cite{poyato2017euler, shvydkoy2018eulerian, an2020global, ha2014hydrodynamic, tadmor2014critical, shu2020flocking}. The (non-fractional) Bessel-type interaction which we select for our alignment, attraction, and repulsion terms has not been studied before. The problem with leaders also has not been studied before. Our expansion of interactions to a wide class of fractional Bessel-type potentials is also novel.

\subsection{Paper Organization}

In Section \ref{notation}, we describe the notation we use throughout the paper. 

Then in Section \ref{section2}, we give a formal (as in relating to the written form of the equations) description of the process one follows to go from a Cuker-Smale-type particle/agent model to an Euler alignment system. This section is only meant to give a conceptual connection between agent models and mean-field models, and should not be understood as a rigorous argument that states the dynamics of the agent models converge to the dynamics of the mean-field model. Additional details can be found in \cite{ha2008particle} for models with fairly regular kernels. Rigorously showing convergence of particle dynamics under singular (i.e. unbounded) interactions to their formal mean-field limits is non-trivial. See \cite{carrillo2014derivation} for discussion of these cases. 

In Section \ref{section3}, we sequentially state and prove each of our results. Each result builds in some way upon the previous. First, we show via convex integration that distributional solutions to the Euler alignment system exist. Then, we show that these solutions satisfy some energy bounds, and so are bounded-energy weak solutions. Finally, we use the energy inequality to show that the solutions satisfy a weak-strong uniqueness principle, i.e. that a Lipschitz solution, should it exist, is unique in the class of bounded-energy weak solutions to the Euler alignment system.

In Section \ref{section4}, we first expand the results we have to include leader-agents which interact with the agent-fluid described by the Euler alignment system, and show that the conclusions of Section 3 hold for this system, as well. We then describe an Euler alignment system subject to a broader class of non-local fractional Bessel-type interactions, and show the conclusions of our previous results in Section 3 hold for this class of models.

Finally, Section \ref{section5} concludes the paper.

\subsection{Notation}
\label{notation}
Throughout this work, we use the ellipsis punctuation `$...$' to indicate line continuation for long equations or expressions. We will also use $C \in \mathbb R^+$ to denote several generic constants, and denote any dependence of this constant on other parameters (constants, functions, or sets) $\alpha_1,...,\alpha_n$ by: $C(\alpha_1,...,\alpha_n)$.
We use the notation $C_w([t_0,t_f];X)$ to mean functions $f:[t_0,t_f] \rightarrow X$, $X$ a reflexive Banach space, s.t. $\forall \{t_n \}_{n=1}^\infty \subset [t_0,t_f]$ : $t_n \rightarrow t \text { in } [t_0,t_f]$ $f$ is s.t.:
\[
( f(t_n) - f(t), \psi )_{X,X^*} \rightarrow 0 \text{ for all } \psi \in X^*,
\]
where $(\cdot,\cdot)_{X,X^*}$ is the pairing of an element of $X$ with an element of its dual space $X^*$.
A similar notation is used in \cite{weak_sols, feireisl2015weak, donatelli2015well}, and a slightly different one in \cite{chiodaroli2016existence, de_lel_weak}. We use the following notations for the inner product and outer products for real vectors $v,w \in \mathbb R^3$:
\[
v^\top w \equiv \langle v,w\rangle_{\mathbb R^3}, 
vw^\top \equiv v \otimes w,
\]
where $v^\top$ is the transposition of $v$. We denote $\text{cl }S$ to be the closure of the set $S \subset \mathbb R^3$, and we use $l:\mathbf L(S)\rightarrow \mathbb R^+_0$ to denote the standard Lebesgue measure, $\mathbf L(S)$ being the $\sigma-$algebra of Lebesgue-measureable sets contained in $S$. If $S$ is bounded and smooth(-enough), $\nu: \partial S \rightarrow \partial B(0,1)$ is the outward pointing normal on the boundary $\partial S$. We use $\mathbb I_{A}:\mathbb R^3\rightarrow \{0,1 \}$ to denote the characteristic (indicator) function of a set $A \subset \mathbf L(\mathbb R^3)$. 

\section{Hydrodynamic Alignment Systems}
\label{section2}
In this section, we introduce a particle model of a multi-agent system and its formal kinetic and hydrodynamic limits. We pass from particles to hydrodynamics only formally.

For certain 
very regular interactions, it can be shown fairly readily that the kinetic and hydrodynamic (under a suitable closure assumption) behaviors indeed correspond to limits related to the empirical measure of the particle dynamics. Singular kernels present significant obstacles to demonstrating such a correspondence \cite{poyato2017euler, carrillo2014derivation}. 

Consider a sequence of particles (boids) 
$\{(x_i, v_i)\}_{i=1}^N (t) \subset \big (\text{cl }D \times \mathbb R^3 \big )$, $t\in [t_0, t_f]$, $D \subset \mathbb R^3$ a bounded, open, connected, smooth region.
The particles are associated to probability space
$\{\Omega,\mathcal B(\Omega), \mathbb P \}$ and state space 
$\{ (\text{cl }D)^N \times \mathbb R^{3N},\mathcal B( (\text{cl }D)^N \times \mathbb R^{3N}) \}$. 
The equations of 
motion for the particles are for each $\omega \in \Omega$:
\begin{equation}
	\begin{cases}
		\frac{d}{dt} x_i(t) = v_i(t) \\
		\frac{d}{dt} v_i(t) = \sum_{\alpha \in A} \Phi_\alpha(x_i(t), v_i(t), 
		\{(x_i, v_i)\}_{i=1}^N(t)) \\
		x_i(t) \in \text{cl } D \text{ }\forall t \in [t_0,t_f]
	\end{cases}
\end{equation}
with i.i.d. initial conditions $(x_i,v_i)(t_0) \sim \mathbb M_{xv} \in \mathcal P(D \times \mathbb R^3)$. 
$A:= \{a,c,r, p,o \}$ are character subscripts corresponding to alignment, cohesion, repulsion, self-propulsion, and obstacle avoidance forces.
These forces $\Phi_\alpha(\cdots)$ are:
\begin{equation}
	\begin{split}
		\label{CSForces}
		&\Phi_a(x_i,v_i, \{(x_i, v_i)\}_{i=1}^N):= \frac{1}{N-1}\sum_{j=1, j \neq i}^N G(x_i, x_j;k_a,\lambda_a) (v_j - v_i), \\
		&\Phi_c(x_i,v_i, \{(x_i, v_i)\}_{i=1}^N):= \frac{1}{N-1}\sum_{j=1, j \neq i}^N \nabla_{1} G(x_i, x_j;k_c,\lambda_c),\\
		&\Phi_r(x_i,v_i, \{(x_i, v_i)\}_{i=1}^N):= \frac{1}{N-1}\sum_{j=1, j \neq i}^N \nabla_{1} G(x_i, x_j;k_r,\lambda_r), \\
		&\Phi_p(x_i,v_i, \{(x_i, v_i)\}_{i=1}^N)= \Phi_p(v_i) :=  \kappa_p v_i(1- P(||v_i||_{\mathbb R^3}))   \\
		&\Phi_o(x_i,v_i, \{(x_i, v_i)\}_{i=1}^N)= \Phi_o(x_i):= \nabla_1 U(x_i) ,
	\end{split}
\end{equation}
where $\nabla_1 (\cdot)$ is the gradient with respect to the first (or only) argument, and 
$k_\alpha, \lambda_{\alpha}$, $\kappa_p > 0$ are real constants. We postpone specification of the class of $G$ for now.
We take: 
\begin{equation}
	\begin{split}
		&P \in C^2_b(\mathbb R;\mathbb R), P'(x) \geq 0 \text{ s.t. } \Phi_p \in \text{Lip}(\mathbb R^3;\mathbb R^3) , \text{ and }U \in C^2_b(D;\mathbb R).
	\end{split}
\end{equation}
To satisfy the spatial constraint, we impose specular reflections on $\partial D$:
\begin{equation}
	\lim_{t^{*+}_{i,j} \downarrow t_{i,j}^*} v_j(t^{*+}_{i,j})=  
	(\mathbf I_3 - 2\mathbf \nu(x_j(t^{*}_{i,j}))\mathbf \nu^\top(x_j(t^{*}_{i,j}))) v_j(t^{*}_{i,j}),  
\end{equation}
where $t^{*}_{i,j}$ is the time of the $i$-th collision of the $j$-th agent with the boundary $\partial D$.

Let $\mu(t) \in \mathcal P(D \times \mathbb R^3), \mu(t_0)=\mathbb M_{xv}$ be the single-particle position-velocity probability distribution \cite{bae2019flocking} of the generalized Cucker-Smale ensemble at time $t$. Assume it has density 
$\rho_{xv}$ w.r.t. the Lebesgue measure. Similarly to \cite{bae2019flocking}, the formal kinetic (Vlasov) limit of the given particle dynamics is:

\begin{equation}
	\begin{cases}
		\partial_t \rho_{xv} + \nabla_x \cdot [v \rho_{xv}] + 
		\nabla_v \cdot [\mathcal F[\rho_{xv}](t,\cdot)\rho_{xv}] = 0 \\
		\mathcal F[\rho_{xv}](t,x,v) := \int_{D \times \mathbb R^3}G(x,y;k_a,\lambda_a)(w-v)
		\rho_{xv}(t,y,w)dy dw + ... \\ 
		\int_{D} \nabla_x G(x,y;k_r,\lambda_r)\rho_{xv}(t,y,v) dy + ... \\
		\int_{D} \nabla_x G(x,y;k_c,\lambda_c)\rho_{xv}(t,y,v) dy + ... \\
		\nabla_x U(x) + \kappa_p v (1 - P(||v||_{\mathbb R^3}))
	\end{cases}
\end{equation}
with boundary conditions on $[t_0,t_f] \times \partial D$ \cite{bae2019flocking, 
	desvillettes2005trend}:
\[
\rho_{xv}(t,x,v) = \rho_{xv}(t,x,(\mathbf I_3 - 2\mathbf \nu(x)\mathbf \nu^\top(x)) v),
\]
and initial condition $\rho_{xv}(t_0,\cdot) = \rho^{xv}_0$, which is the density associated to 
$\mathbb M_{xv}$. 
Now, we define the 
spatial probability density and momentum density:
\[\rho: [t_0, t_f] \times D \rightarrow \mathbb R, \text{ }
\mathbf j: [t_0, t_f] \times D \rightarrow \mathbb R^3,
\] respectively, as:
\begin{equation}
	\begin{split}
		\rho(t,\cdot) &:= \int_{\mathbb R^3} \rho_{xv}(t,\cdot) \text{ } dv \\
		\mathbf j(t,\cdot)&:= \int_{\mathbb R^3} v\rho_{xv}(t,\cdot) \text{ } dv=:\rho(t,\cdot)\mathbf u(t,\cdot).
	\end{split}
\end{equation}
Assume $\rho > 0 \text{ in } D$, and consider $(\rho(t,\cdot),\mathbf j(t,\cdot))^\top$ to be extended by zero outside $D$ when we use them as functions over the whole $\mathbb R^3$.
It can be shown after repeated formal application of integration by parts and 
assumption of the mono-kinetic closure \textit{ansatz} \cite{ballerini2008empirical}:
\begin{equation}
	\rho_{xv}(t,x,v)= \rho(t,x)\delta(v - \mathbf u(t,x))
\end{equation} 
that we 
obtain the compressible Euler equations over $(t,x) \in [t_0,t_f] \times D:$
\begin{equation}
	\begin{cases}\label{euler}
		\partial_t \rho + \nabla_x \cdot \mathbf j = 0 \text{ in } (t_0,t_f] \times D \\
		\partial_t \mathbf j + 
		\nabla_x \cdot [\rho^{-1}\mathbf j \mathbf j^\top] = \mathbf S[\rho, \mathbf j] \text{ in } (t_0,t_f] \times D \\
		\mathbf S[\rho, \mathbf j] =  \rho \mathcal L_a \mathbf j - 
		\mathbf j \mathcal L_a \rho - 
		\rho \nabla_x[\mathcal L_c\rho + \mathcal L_r\rho + U] + 
		\kappa_p \mathbf j(1-P(\rho^{-1}||\mathbf j||_{\mathbb R^3} ) ),
	\end{cases}
\end{equation}
subject to the non-penetrative boundary conditions \cite{desvillettes2005trend} on  $\partial D$ and initial conditions:
\begin{equation}
	\label{bcs}
	\mathbf j^\top \mathbf \nu = 0 \text{ } \forall 
	\text{ } (t,x) \in [t_0,t_f] \times \partial D; \text{ } (\rho(t_0,\cdot), \mathbf j(t_0,\cdot))^\top = (\rho_0, \mathbf j_0)^\top \text{ in } D.
\end{equation}
This is the Euler alignment system.
$\mathcal L_{\alpha}: L^2(D;\mathbb R) \rightarrow L^2(D;\mathbb R)$
denotes a linear operator of form
\[
\mathcal L_\alpha \phi \circ (t,\cdot):= \int_{\mathbb R^3} 
G(\cdot,y;k_\alpha;\lambda_\alpha)\mathbb I_D(y) \phi(t,y) \text{ } dy.
\]
If we take 
\[
G(\cdot;k_a,\lambda_a), G(\cdot;k_c,\lambda_c), 
G(\cdot;k_r,\lambda_r)
\] 
to be fundamental solutions for 
elliptic partial differential operators $\Lambda_a, \Lambda_c,
\Lambda_r$, respectively, 
we can construct an augmented system of PDEs for 
state vector field $(\rho,\mathbf j)$ with
alignment potentials $\pi_\rho[\rho], 
\pi_{\mathbf j}[\mathbf j]$
and cohesion-repulsion potentials $\mathbf V[\rho]:=(v_c[\rho], v_r[\rho])^\top$:
\begin{equation}
	\begin{cases}\label{euler2}
		\partial_t \rho + \nabla_x \cdot \mathbf j = 0 \text{ in } (t_0,t_f] \times D; \\
		\partial_t \mathbf j + \nabla_x \cdot [\rho^{-1}\mathbf j \mathbf j^\top] = 
		\mathbf S[\rho, \mathbf j] \text{ in } (t_0,t_f] \times D; \\
		\Lambda_a \mathbf \pi_\rho[\rho](t,\cdot) = \mathbb I_D \rho(t,\cdot) \text{ in } \mathbb R^3;\\
		\Lambda_a \mathbf \pi_{\mathbf j}[\mathbf j](t,\cdot)= \mathbb I_D \mathbf j(t,\cdot) \text{ in } \mathbb R^3; \\
		\Lambda_c v_c[\rho](t,\cdot) = \mathbb I_D \rho(t,\cdot) \text{ in } \mathbb R^3;  \\
		\Lambda_r v_r[\rho](t,\cdot) = \mathbb I_D \rho(t,\cdot) \text{ in } \mathbb R^3; \\
		\mathbf S[\rho, \mathbf j] := \rho \pi_{\mathbf j}[\mathbf j] - \mathbf j \pi_\rho[\rho] - \rho \nabla_x[\mathbf 1^\top_2 \mathbf V[\rho] + U] 
		+ 
		\kappa_p \mathbf j (1-P(\rho^{-1}||\mathbf j||_{\mathbb R^3} )); \\
		\mathbf j^\top \mathbf \nu = 0 \text{ in } [t_0,t_f] \times \partial D; \\ 
		(\rho(t_0,\cdot), \mathbf j(t_0,\cdot))^\top = (\rho_0, \mathbf j_0)^\top \text{ in } D,
	\end{cases}
\end{equation}
where $\mathbf 1_n$ is the $n$-vector with ones as its entries. 
We pick the parametrized family of linear elliptic operators with constant coefficients:
\begin{equation}\label{bvps}
	\mathcal L_\alpha^{-1} = \Lambda_\alpha =\frac{1}{4\pi k_\alpha}
	(\lambda_\alpha^2 I - \nabla_x^2),
\end{equation}
with $k_a, k_r > 0, k_c < 0$, $\lambda_a, \lambda_r, \lambda_c \neq 0$.
We present a very simple lemma which will aid us in showing existence of weak 
solutions to this augmented system later on. 
\newline
\begin{lemma}
	\label{reg_pots}
	Consider the elliptic problem:
	\begin{equation*}
		\begin{split}
			\Lambda_\alpha w[\phi] & = \mathbb I_D \phi \text{ in } \mathbb R^3 
		\end{split}
	\end{equation*}
	with $k_\alpha, \lambda_\alpha \neq 0$. 
	If $\phi \in L^{\infty}(D;\mathbb R)$, then 
	$w \in C^1_b (D;\mathbb R)$.
	If: \[\phi \in C^{0,\alpha}(D;\mathbb R),\alpha \in (0,1]\] then $\phi 
	\in C^{2,\alpha}(D;\mathbb R)$.
\end{lemma}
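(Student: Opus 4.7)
My plan is to exploit the explicit fundamental solution of the Helmholtz--Yukawa operator $\Lambda_\alpha = (4\pi k_\alpha)^{-1}(\lambda_\alpha^2 I - \nabla_x^2)$, and then to combine elementary convolution estimates (for the first claim) with interior Schauder theory (for the second). Since the operator $\lambda_\alpha^2 - \nabla_x^2$ has fundamental solution $e^{-|\lambda_\alpha|\,|x|}/(4\pi|x|)$ on $\mathbb{R}^3$, I would represent the unique tempered solution as the convolution
\[
w(x) = \int_{D} G_\alpha(x-y)\, v(y)\, dy, \qquad G_\alpha(x) := \frac{k_\alpha\, e^{-|\lambda_\alpha|\,|x|}}{|x|},
\]
and begin by observing that both $G_\alpha$ and $|\nabla G_\alpha(x)| \lesssim (1+|\lambda_\alpha|\,|x|)\,e^{-|\lambda_\alpha|\,|x|}\,|x|^{-2}$ lie in $L^1(\mathbb{R}^3)$: the singularities $|x|^{-1}$ and $|x|^{-2}$ are locally integrable in three dimensions, while the exponential prefactor controls the tail.

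For the first claim, I would apply Young's convolution inequality to $f := \mathbb{I}_D v \in L^\infty(\mathbb{R}^3)$ to get
\[
\|w\|_{L^\infty(\mathbb{R}^3)} \le \|G_\alpha\|_{L^1}\|v\|_{L^\infty}, \qquad \|\nabla w\|_{L^\infty(\mathbb{R}^3)} \le \|\nabla G_\alpha\|_{L^1}\|v\|_{L^\infty}.
\]
Continuity of $w$ and $\nabla w$ I would then extract either by dominated convergence inside the convolution integrals, or, more efficiently, via the $L^p$-elliptic estimate $\|w\|_{W^{2,p}_{\mathrm{loc}}} \lesssim \|f\|_{L^p}$ for any $p>3$ combined with Morrey's embedding $W^{2,p} \hookrightarrow C^{1,1-3/p}$. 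Either route delivers $w \in C^1_b(D;\mathbb{R})$.

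For the second claim, the key observation is that the jump of $\mathbb{I}_D v$ across $\partial D$ is invisible to interior regularity: on any subdomain $D' \Subset D$ the source coincides with $v \in C^{0,\alpha}(D)$. I would then apply the standard interior Schauder estimate for the uniformly elliptic operator $\lambda_\alpha^2 - \nabla_x^2$ to obtain
\[
\|w\|_{C^{2,\alpha}(D')} \le C\bigl(\|v\|_{C^{0,\alpha}(D)} + \|w\|_{L^\infty(\mathbb{R}^3)}\bigr),
\]
using the $L^\infty$ control of $w$ from the first part, and exhaust $D$ by such $D'$. The only subtle point in the argument, and the main obstacle, is the boundary jump of $\mathbb{I}_D v$: it prevents Schauder theory from delivering global $C^{1,\alpha}$ regularity, and is precisely why the first part must be handled by convolution or $L^p$ estimates, and why the second part is phrased as interior regularity on the open set $D$ rather than up to its closure.
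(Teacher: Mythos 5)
Your proposal is correct and takes essentially the same route as the paper's (terse) sketch: represent $w$ as the convolution of $\mathbb{I}_D v$ with the explicit Yukawa kernel, then import classical Newtonian-potential regularity theory. The only cosmetic difference is that for the $C^{2,\alpha}$ claim you invoke interior Schauder estimates as a black box, whereas the paper's cited reference (Gilbarg--Trudinger) reaches the same interior $C^{2,\alpha}$ bound by direct convolution estimates on the kernel -- two standard and interchangeable routes, and your observation that the jump of $\mathbb{I}_D v$ across $\partial D$ forces the regularity to be interior rather than up-to-the-boundary is exactly the point the paper leaves implicit.
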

\begin{proof}
	(Sketch.) Notice the fundamental solution is the Yukawa potential:
	\[
	G(||x-y||_{\mathbb R^3};k_\alpha, \lambda_\alpha) = k_\alpha \frac{e^{-\lambda_a 
			||x-y||_{\mathbb R^3}}}{||x-y||_{\mathbb R^3}}
	= 4 \pi k_\alpha e^{-\lambda_a 
		||x-y||_{\mathbb R^3}} G_N(||x-y||_{\mathbb R^3})
	\]
	where $G_N$ is the Newtonian potential. The Newtonian potential 
	clearly is also an upper bound to the Yukawa potential. 
	A solution to the above is:
	\begin{equation}
		\label{conv}
		w[\phi](\cdot) =(G(\cdot;k_\alpha,\lambda_\alpha)*\mathbb I_D v)(\cdot) =\int_{\mathbb R^3}G(\cdot - y;k_\alpha,\lambda_\alpha)\mathbb I_D(y)\phi(y) \text{ } dy.
	\end{equation}
	We can use the same approaches to obtain regularity for the Newtonian potential \cite{gilbarg2015elliptic} on the Yukawa potential 
	and the results of Lemma \ref{reg_pots} follow.
\end{proof}
\begin{remark}
	We can prove this Lemma via results on Bessel potential spaces and conclude $w[v] \in C^{2,\alpha}(D;\mathbb R)$ assuming $v \in H^2(D;\mathbb R)$ and then using a Sobolev embedding to obtain H\"older regularity. See Theorem \ref{bessel_pot} for details.
\end{remark}
Hereafter, assume the potentials $\pi_\rho, \pi_{\mathbf j}, v_r, v_c$ are computed via the given convolution integral representation (\ref{conv}) for each time $t \in [t_0,t_f]$.
\section{Existence and Weak-Strong Uniqueness}
\label{section3}
In this section, we describe weak solutions to (\ref{euler2}). The solutions $(\rho,\mathbf j)$ are of type:
\begin{equation}
	\begin{split}
		\label{class_of_sols}
		&\rho \in C^3([t_0,t_f];H^2(D;\mathbb R)) 
		\cap \text{Lip}([t_0,t_f] \times \text{cl }D;\mathbb R), \rho > 0,   \\
		&\mathbf j \in 
		C_{w}([t_0, t_f]; L^2(D;\mathbb R^3)) \cap 
		L^\infty((t_0,t_f) \times D;\mathbb R^3)
	\end{split}
\end{equation}
which satisfy (\ref{euler2}) in the following sense:

\begin{equation}
	\label{weak1}
	\int_{D} \rho(t,\cdot) \phi(t,\cdot) \text{ } dx \text{ } \Big |_{t_0}^{t_f} = 
	\int_{t_0}^{t_f} \int_{D} \rho \partial_t \phi + \mathbf j^\top
	\nabla_x \phi \text{ }dx \text{ } dt;
\end{equation}
\begin{equation}
	\label{weak2}
	\begin{split}
		&\int_{D} \mathbf j^\top(t,\cdot) \psi(t,\cdot) \text{ } dx \text{ } \Big |_{t_0}^{t_f} = ... \\
		&\int_{t_0}^{t_f} \int_{D} \mathbf j^\top \partial_t \psi + 
		\langle \rho^{-1} \mathbf j \mathbf j^\top,
		\nabla_x \psi \rangle_{\mathbb R^{3 \times 3}} 
		+ \psi^\top {\mathbf S}
		[\rho, \mathbf j] \text{ }dx \text{ } dt;
	\end{split}
\end{equation}
\begin{equation}
	\label{weak3}
	\rho(t_0,\cdot) = 
	\rho_0 , 
	\mathbf j(t_0,\cdot)  = \mathbf j_{0}
\end{equation}
\newline
for arbitrary $(\phi, \psi) \in \mathcal D([t_0,t_f] \times D;\mathbb R \times \mathbb R^3)$. Throughout, assume:
\[
\rho_0 \in H^2(D;\mathbb R) \cap \text{Lip}(\text{cl }D;\mathbb R), \rho_0 > 0,
\]
\[
\mathbf j_0 \in \text{Lip}(\text{cl }D;\mathbb R^3),
\]
and consider $(\rho(t,\cdot),\mathbf j(t,\cdot))^\top$ to be extended by zero outside $D$ if we must use them as functions over the whole $\mathbb R^3$.
\subsection{Existence of ``Wild'' Weak Solutions}
\begin{theorem}
	\label{prop1}  
	Consider the problem (\ref{euler2}). 
	Suppose $\rho_0$, $\mathbf j_0$, $U$, $P$ are (as assumed earlier):
	\[
	\begin{split}
		&P \in C^2_b(\mathbb R;\mathbb R), P'(x) \geq 0 \text{ s.t. } \Phi_p \in \text{Lip}(\mathbb R^3;\mathbb R^3) , \text{ and }U \in C^2_b(D;\mathbb R),
	\end{split}
	\]
	\[
	\rho_0 \in H^2(D;\mathbb R) \cap \text{Lip}(\text{cl }D;\mathbb R), \rho_0 > 0,
	\]
	\[
	\mathbf j_0 \in \text{Lip}(\text{cl }D;\mathbb R^3).
	\]
	Then, (\ref{euler2}) admits infinitely many weak solutions of the type:
	\begin{equation}
		\begin{split}
			&\rho \in C^3([t_0,t_f];H^2(D;\mathbb R)) 
			\cap \text{Lip}([t_0,t_f] \times \text{cl }D;\mathbb R), \rho > 0,   \\
			&\mathbf j \in 
			C_{w}([t_0, t_f]; L^2(D;\mathbb R^3)) \cap 
			L^\infty((t_0,t_f) \times D;\mathbb R^3). 
		\end{split}
	\end{equation}
\end{theorem}
\begin{proof}
	To prove this result, we embed the problem into an abstract functional \textit{incompressible} Euler 
	system \cite{feireisl2015weak, donatelli2015well, chiodaroli2016existence} and apply results therefrom. First, define the binary operation $(\cdot \odot \cdot) : \mathbb R^3 \times 
	\mathbb R^3 \rightarrow \mathbb S^3_{0}$, the zero-trace symmetric matrices:
	\[
	v \odot w:= vw^\top - \frac{1}{3}v^\top w \mathbf I_{3}.
	\]
	We seek to transform our problem into: Find 
	\[
	\mathbf v \in \Big ( C_w([t_0,t_f];L^2(D; \mathbb R^3)) 
	\cap L^\infty((t_0,t_f) \times D;\mathbb R^3) \Big ) =: X
	\]
	s.t. (for some $\mathbf h, r, \mathbf M$ of a class soon-to-be-specified):
	\begin{equation}
		\begin{split}
			\label{abs1}
			\partial_t \mathbf v + &\nabla_x \cdot \Big [ \frac{(\mathbf v + \mathbf h[\mathbf v]) 
				\odot (\mathbf v + \mathbf h[\mathbf v]) }{r[\mathbf v]} + \mathbf M[\mathbf v]\Big ] = 0
		\end{split}
	\end{equation}
	in the sense of $\mathcal D'([t_0,t_f] \times D;\mathbb R^3)$;
	\begin{equation}
		\nabla_x \cdot \mathbf v(t,\cdot) = 0 
	\end{equation}
	(after an extension by zero to $\mathbb R^3$) in the sense of $\mathcal D'([t_0,t_f] \times \mathbb R^3; \mathbb R^3)$;
	\begin{equation}
		\begin{split}
			\label{abs2}
			\mathbf v^\top(t,\cdot) \mathbf \nu  =  0 \text{ on } \partial D 
			\text{ } \forall \text{ }
			t \in [t_0,t_f];
		\end{split}
	\end{equation}
	\begin{equation}
		\begin{split}
			\label{abs3}
			\mathbf v(t_0,\cdot) = \mathbf v_0;
		\end{split}
	\end{equation}
	subject to the constraint:
	\begin{equation}
		\begin{split}
			\label{abs4}
			\frac{1}{2}\frac{||\mathbf v + \mathbf h[\mathbf v]||^2_{\mathbb R^3}}{r[\mathbf v]}(\cdot) = 
			e[\mathbf v](\cdot) \text{ a.e. in } [t_0,t_f] \times D
		\end{split}
	\end{equation}
	for some $e$ whose class is likewise soon-to-be-specified.
	It is assumed 
	for open set $Q \subseteq (t_0,t_f) \times D \text{ s.t. } l(Q) = l((t_0,t_f) \times D)$ that:
	\[
	\mathbf h: X \rightarrow C_b(Q;\mathbb R^3),
	\]
	\[
	r: X 
	\rightarrow C_b(Q;\mathbb R), r > 0,
	\]
	$$e: X  \rightarrow C_b(Q;\mathbb R), e \geq 0,\text{ and }$$
	$$\mathbf M: X \rightarrow C_b(Q;\mathbb S^3_0)$$
	are $Q$\textit{-continuous}, which is to say:
	\begin{definition}
		An operator 
		\[
		b: X \rightarrow C_b(Q;\mathcal V),
		\]
		$\mathcal V$ a finite dimensional vector space, is $\mathbf{Q}$\textbf{-continuous}, if $b$ is s.t.:
		\begin{enumerate}
			\item $b$ maps sets bounded in $L^\infty((t_0,t_f) \times D;\mathbb R^3)$ to sets bounded in $C_b(Q;\mathcal V)$;
			\item For $\{\mathbf w_m \}_
			{m=1}^\infty$, $\mathbf w_m \in X$, and 
			$\mathbf w_m \rightarrow \mathbf w \in X$ weakly 
			in $C_w([t_0,t_f]; L^2(D;\mathbb R^3))$ and 
			weakly-* in $L^\infty((t_0,t_f) \times D; 
			\mathbb R^3)$ implies
			\[
			b[\mathbf w_m] \rightarrow b[\mathbf w] \text{ uniformly in } C_b(Q; \mathcal V);
			\]
			\item If $\mathbf v(t,\cdot) = \mathbf w(t,\cdot)$ for $t_0 \leq t \leq 
			\tau \leq t_f$, then:
			\[
			b[\mathbf v] = b[\mathbf w] \text{ in } \big ((0,\tau] \times D\big ) \cap Q.
			\]
		\end{enumerate}
	\end{definition}
	We introduce a set of subsolutions to the abstract system (\ref{abs1}- \ref{abs3}), $X_0 \subset X,$
	where for $\mathbf F \in L^\infty((t_0,t_f) \times D; 
	\mathbb S^3_0), \mathbf v \in X$, 
	$\mathbf h, \mathbf F, r, e, \mathbf M$ are $Q$-continuous and s.t for 
	$t_0 < t < t_f$: 
	\begin{equation}
		\label{abs5}
		\frac{3}{2}\lambda_{max}\Big [ \frac{(\mathbf v + \mathbf h[\mathbf v]) 
			\odot (\mathbf v + \mathbf h[\mathbf v]) }{r[\mathbf v]} + \mathbf M[\mathbf v] - \mathbf F \Big] (t,\cdot)
		-e[\mathbf v](t,\cdot) < 0,
	\end{equation}
	\[
	X_0:= \Big \{ \mathbf v : (\ref{abs2}- \ref{abs5}) \text{ satisfied}, 
	\partial_t \mathbf v + \nabla_x \cdot \mathbf F = 0 \text{ in } 
	\mathcal D'([t_0,t_f] \times D;\mathbb R^3) \Big \}.
	\]
	A pair $(\mathbf v, \mathbf F)$ which satisfy these constraints is termed an admissible subsolution-flux pair.
	
	Similarly to \cite{weak_sols, feireisl2015weak}, we have the following theorem:
	\begin{theorem} 
		\label{existence_fundamental}    
		Suppose operators $\mathbf h, r, e, \text{ and }\mathbf M$ are $Q$-continuous and subject to the conditions (\ref{abs5}). 
		Suppose that $(r[\mathbf v])^{-1}$ is 
		$Q$-continuous.
		Further, suppose $X_0$ is non-empty and bounded in $L^\infty((t_0,t_f) \times D; 
		\mathbb R^3)$. Then, the abstract problems (\ref{abs1}- \ref{abs4}) admit 
		infinitely many weak solutions.
	\end{theorem}
	\begin{proof}
		Define 
		\[
		e[\mathbf v](t,\cdot):= \sigma(t) - \frac{3}{2} \Sigma[\mathbf v](t,\cdot)
		\]
		for some $Q$- continuous operator $\Sigma$, and some $\sigma:[t_0,t_f] 
		\rightarrow \mathbb R$ to be determined s.t. $e[\mathbf v] \geq 0$, and 
		\[
		\eta[\mathbf v, \mathbf F](t,\cdot):= \frac{3}{2} \lambda_{max} \Big [ 
		\frac{(\mathbf v + \mathbf h[\mathbf v])(\mathbf v + \mathbf h[\mathbf v])^\top}{
			r[\mathbf v]} + \mathbf M[\mathbf v] - \mathbf F
		\Big ](t,\cdot).
		\]
		To begin the proof, we present a lemma, given very similarly to Lemma 3.2 in \cite{chiodaroli2016existence}:
		\begin{lemma}
			Suppose $\{(\mathbf v_m, \mathbf F_m) \}_{m=1}^\infty$ are a sequence 
			of admissible subsolution-flux pairs. Then, $\exists$ a unique 
			$(\mathbf v^*, \mathbf F^*)$ s.t. 
			\begin{equation}
				\begin{split}
					&\mathbf v_m \rightarrow \mathbf v^* \text{ in } C_w([t_0,t_f];L^2(D;\mathbb R^3)), \text{ and weakly-* in }
					L^\infty((t_0,t_f)\times D;\mathbb R^3); \\
					& \mathbf F_m \rightarrow \mathbf F^* \text{ weakly-* in } 
					L^\infty((t_0,t_f)\times D;\mathbb S_0^3),
				\end{split}
			\end{equation}
			which is an admissible subsolution-flux pair to the abstract Euler problem up to the relaxation on 
			$t_0 < t < t_f$:
			\begin{equation}
				\frac{3}{2}\lambda_{max}\Big [ \frac{(\mathbf v^* + \mathbf h[\mathbf v^*]) 
					\odot (\mathbf v^* + \mathbf h[\mathbf v^*]) }{r[\mathbf v^*]} + \mathbf M[\mathbf v^*] - \mathbf F^* \Big](t,\cdot) 
				-e[\mathbf v^*](t,\cdot) \leq 0.
			\end{equation}
			Further, if for the functional $I:L^2([t_0,t_f]\times D
			;\mathbb R^3) \times (0, (t_f-t_0)/2) \rightarrow \mathbb R^-_0$:
			\begin{equation}
				I[\mathbf v](t):= 
				\inf_{\tau \in [t_0 + t, t_f - t]} \int_{D} 
				\frac{1}{2} \frac{||\mathbf v + \mathbf h[\mathbf v]||^2_{\mathbb R^3}}{r[\mathbf v]}(\tau,\cdot) - e[\mathbf v](\tau,\cdot) \text{ }dx
			\end{equation}
			$I[\mathbf v^*](t) \equiv 0$, then $\mathbf v^*$ solves the abstract Euler system 
			(\ref{abs1}- \ref{abs4}).
		\end{lemma}
		\begin{proof} (sketch)
			The claim follows with minor modification to the argument for Lemma 3.2. in \cite{chiodaroli2016existence}. Simply replace the construction of the 
			operator $e[\cdot]$ and the functional $I[\cdot]$ therein with the ones defined here and the claim follows.
		\end{proof}
		Next, we give the oscillatory lemma of \cite{feireisl2015weak, chiodaroli2016existence}.
		\begin{lemma}(Oscillatory Lemma)
			\label{oscil_lemma}
			Let $\mathbf v \in X_0$ and $(\mathbf v, \mathbf F)$ be an admissible subsolution-flux pair, and suppose $t \in (t_0,t_f)$. Then, $\exists \{\mathbf w_m \}_{m=1}^\infty \subset \mathcal D((t,t_f) \times D;\mathbb R^3), 
			\{\mathbf \Psi_m\}_{m=1}^\infty \subset \mathcal D((t,t_f) \times D;\mathbb S^3_0)$ s.t.
			\begin{equation*}
				\begin{split}
					&(\mathbf v + \mathbf w_m, \mathbf F + \mathbf \Psi_m) \text{ are an admissible subsolution-flux pair } \\
					&\mathbf w_m \rightarrow 0 \text{ in } C_w([t_0,t_f];L^2(D;\mathbb R^3)) \\
					& \text{there exists } C(E) > 0 \text{ s.t. }  e[\mathbf v](t) \leq E \text{ and } \\
					& \liminf_{m \rightarrow \infty} I[\mathbf v + 
					\mathbf w_m](t) \geq I[\mathbf v](t) + C(E)I^2[\mathbf v](t).
				\end{split}
			\end{equation*}
		\end{lemma}
		\begin{proof}
			Let $\tilde X:= C_w([t_0,t_f];L^2(D;\mathbb R^3)) \cap C((t_0,t_f) 
			\times D;\mathbb R^3)$, and:
			\[
			f \in C_b((t_0,t_f) \times D;\mathbb R); \text{ } \mathbf \Gamma \in C_b((t_0,t_f) \times D;\mathbb S_0^3),
			\]
			\begin{equation}
				\begin{split}
					\Xi(\mathbf \Gamma, f):= \Big\{ &\mathbf v \in  \tilde X : \text{ there exists } \mathbf \Upsilon \in 
					C_b((t_0,t_f) \times D;\mathbb S_0^3) \text{ s.t. } ...  \\ 
					& \partial_t \mathbf v + \nabla_x \cdot \mathbf \Upsilon = 0 \text{ in } \mathcal D'([t_0,t_f] \times D;\mathbb R^3), ... \\
					& \mathbf v^\top \mathbf \nu = 0 \text{ on } \partial D \text{ }
					\forall \text{ }t \in [t_0,t_f], ... \\
					& \forall \tau > t_0 
					\inf_{(t,x)\in (\tau, t_f) \times D} f(t,x) - 
					\epsilon[\mathbf v, \mathbf \Upsilon, \mathbf \Gamma](t,x) > 0 , ... \\ 
					& \mathbf v(t_0,\cdot) = \mathbf v_0 \Big \},
				\end{split}
			\end{equation}
			where $\epsilon:\tilde X \times C_b((t_0,t_f) \times D;\mathbb S^3_0) \times C_b((t_0,t_f) \times D;\mathbb S^3_0) 
			\rightarrow C_b(Q;\mathbb R)$ is
			\begin{equation}
				\epsilon[\mathbf v, \mathbf \Upsilon, \mathbf \Gamma]:= 
				\frac{3}{2}\lambda_{max}(\frac{1}{r[\mathbf v]}\mathbf v \mathbf v^\top + \mathbf \Gamma - \mathbf \Upsilon).
			\end{equation}
			
			To continue the proof of the oscillatory lemma, we present a lemma similar to how it is presented in \cite{chiodaroli2016existence}, 
			which was originally proven in \cite{donatelli2015well}:
			\begin{lemma}
				Let $\mathcal A:=(t_1,t_2) \times D \subseteq Q$ be an open subset, and $\mathbf \Gamma, f$ of the classes previously specified, $f > 0$ in $(t_0,t_f) \times D$. Suppose 
				$\mathbf v \in \Xi(\mathbf \Gamma, f)$ with flux $\mathbf \Upsilon$. Then, there exists
				$ \Lambda > 0$ depending only on $f$ and sequences 
				$\{\mathbf w_m \}_{m=1}^\infty \subset \mathcal D(\mathcal A;\mathbb R^3), \{\mathbf \Psi_m \}_{m=1}^\infty \subset \mathcal D(\mathcal A;\mathbb S^3_0)$ 
				s.t.
				\begin{equation}
					\begin{split}
						&\mathbf v + \mathbf w_m \in \Xi(\mathbf \Gamma, f) \text{ with flux } \mathbf \Upsilon + \mathbf \Psi_m\\
						&\mathbf w_m \rightarrow 0 \text{ in } C_w([t_0,t_f];L^2(D;\mathbb R^3)) \\
						&\liminf_{m \rightarrow \infty} \inf_{t \in (t_1,t_2)} 
						\int_{D}\frac{1}{2}\frac{1}{r[\mathbf v]}||\mathbf v + \mathbf w_m||_{\mathbb R^3}^2 \text{ } dx\geq ... \\
						&\inf_{t \in (t_1,t_2)} \Big [
						\int_{D}\frac{1}{2}\frac{1}{r[\mathbf v]}||\mathbf v||_{\mathbb R^3}^2 \text{ }dx + 
						\Lambda \Big (\int_{D} f - \frac{1}{2} \frac{1}{r[\mathbf v]}||\mathbf v||_{\mathbb R^3}^2 \text{ } dx \Big ) ^2
						\Big ]
					\end{split}
				\end{equation}
			\end{lemma}
			The rest of the proof of Lemma \ref{oscil_lemma} follows directly from the arguments given in \cite{chiodaroli2016existence}.
		\end{proof}
		Finally, we use the results on Baire categories first used by 
		\cite{de_lel_weak} and applied in \cite{feireisl2015weak, chiodaroli2016existence}, and others:
		\begin{lemma}
			Let $(X,d)$ be a complete metric space and suppose function 
			$I:X \rightarrow \mathbb R^-_0$ is a Baire class-1 function. Suppose 
			$X_0 \subseteq X$ is dense in $X$ and non-empty, s.t.
			$\forall \beta < 0 \exists \alpha(\beta) > 0$, $\forall x \in X_0$ with:
			\[
			I(x) < \beta < 0 \text{ } \exists \{ x_m\}_{m=1}^\infty \in X_0, 
			x_m \rightarrow x \text{ in } (X,d)
			\]
			and
			\[
			\liminf_{m\rightarrow \infty} I(x_m) \geq I(x) + \alpha(\beta)
			\] 
			then $\exists S \subseteq X_0$ s.t. $I(x) = 0 \text{ on } S$.
		\end{lemma}
		We now follow the rest of the result of \cite{chiodaroli2016existence} and the proof of Theorem \ref{existence_fundamental} is concluded.
	\end{proof}
	Now, it remains to construct the operators 
	$r, \mathbf h, e, \mathbf M$ as they pertain to our problem. Take 
	$Q:= (t_0,t_f) \times D$. We take the 
	Helmholtz decomposition for $\mathbf j$:
	\[
	\mathbf j = \mathbf v + \nabla_x \Phi[\mathbf v]
	\]
	similarly to the procedure of \cite{li2021weak}.
	To satisfy the BCs on $\mathbf j$ as well as the continuity equation for $\rho$, 
	we take $\Phi[\mathbf v]$ to solve the Neumann problem:
	\begin{equation}
		\label{neumann}
		\begin{split}
			\nabla_x^2 \Phi[\mathbf v] = -\partial_t \rho[\mathbf v] \text { in } D \\
			(\nabla_x^\top \Phi[\mathbf v]) \mathbf \nu = 0 \text{ on } \partial D.
		\end{split}
	\end{equation}
	Using the Helmholtz decomposition, we have expressed the 
	momentum in such a way that we can select $\rho[\mathbf v]$ as we need it. 
	Select arbitrary
	\[
	\rho \in C^3([t_0,t_f];H^2(D;\mathbb R)) 
	\cap \text{Lip}([t_0,t_f] \times \text{cl }D;\mathbb R), \rho > 0, \rho(t_0,\cdot) = \rho_0
	\] 
	s.t. $\int_{D}\rho(t,\cdot) \text{ }dx = 1$. Now, rather obviously using the dominated convergence theorem on one hand:
	\[
	\frac{d}{dt}\int_{D} \rho(t,\cdot) \text{ } dx = \int_{D} \partial_t \rho(t,\cdot) \text{ } dx,
	\]
	and on the other hand we know $\int_{D}\rho(t,\cdot)dx = 1$. So, we conclude
	\[
	\int_{D} \partial_t \rho(t,\cdot) \text{ }dx = 0.
	\]
	Since the R.H.S. of (\ref{neumann}) 
	and the associated boundary 
	condition have the same integral over their respective domains for each $t \in [t_0,t_f]$, the Neumann problem (\ref{neumann}) 
	has a 
	unique $H^1(D;\mathbb R)$ solution up to an additive constant for 
	each $t \in [t_0,t_f]$ via Lax-Milgram \cite{gilbarg2015elliptic}. It is sufficient to select any one of these solutions. Additionally, we have 		      	that: 
	\[
	\Phi[\mathbf v](t,\cdot) \in H^4(D;
	\mathbb R) \subset C^{2,1/2}(D;
	\mathbb R)
	\] using elliptic regularity and Sobolev 
	embedding \cite{taylor2010partial, evans}. $\nabla_x \Phi[\mathbf v]$ is thus unique. From thrice time-differentiability of 
	$\rho[\mathbf v]$: 
	\[
	\Phi[\mathbf v] \in C^2([t_0,t_f];  C^{2,1/2}(D;
	\mathbb R)).
	\]
	The Helmholtz decomposition of $\mathbf j_0$ is then:
	\[
	\mathbf j_0 = \mathbf v_0 + \nabla_x \Phi[\mathbf v](t_0,\cdot)
	\]
	so we can obtain $\mathbf v_0$ from $\rho_0, \mathbf j_0$.
	The assumption on $\rho[\mathbf v]$ guarantees that via the integral representation outlined in Lemma \ref{reg_pots}, 
	the repulsion and attraction potentials are s.t. (via a Sobolev embedding): 
	\begin{equation}
		v_r[\rho[\mathbf v]](t,\cdot), v_c[\rho[\mathbf v]](t,\cdot)
		\in C^{2,1/2}(D;\mathbb R) \text{ for each } t\in [t_0,t_f],
	\end{equation}
	so the gradients of these potentials exist in the classical sense.
	Note that also:
	\[
	\pi_\rho[\rho[\mathbf v]](t,\cdot)
	\in C^{2,1/2}(D;\mathbb R), \pi_{\mathbf j}[\mathbf v + \nabla_x \Phi[\mathbf v]] \in C^{2,1/2}(D;\mathbb R^3)  \text{ for each } t\in [t_0,t_f]
	\]
	via Lemma \ref{reg_pots} by the assumptions on $\rho[\mathbf v](t,\cdot)$ and $\mathbf v \in X$.
	We re-write the momentum equation using the Helmholtz decomposition:
	\begin{equation}
		\label{euler3}
		\begin{split}
			\partial_t \mathbf v + & \nabla_x \cdot \Big [ \frac{(\mathbf v + 
				\nabla_x \Phi[\mathbf v]) 
				(\mathbf v + \nabla_x \Phi[\mathbf v])^\top }{\rho[\mathbf v]} + 
			\mathbf I \partial_t \Phi [\mathbf v] \Big ]  = 
			\mathbf S[\cdots]  \\
			&\mathbf S[\rho[\mathbf v], \mathbf v + 
			\nabla_x \Phi[\mathbf v] ]= 
			\rho[\mathbf v] \pi_{\mathbf j} \Big [\mathbf v + 
			\nabla_x \Phi[\mathbf v] \Big ] - ... \\
			&(\mathbf v + \nabla_x \Phi[\mathbf v]) \pi_{\rho} [\rho[\mathbf v]] -
			\rho[\mathbf v] \nabla_x[\mathbf 1_2^\top \mathbf V[\rho[\mathbf v]] + U] + ... \\ 
			& \kappa_p \mathbf 
			(\mathbf v + \nabla_x \Phi[\mathbf v])(1-P(\rho^{-1}[\mathbf v]||
			\mathbf v + \nabla_x \Phi[\mathbf v]||_{\mathbb R^3} ) ).
		\end{split}
	\end{equation}
	Let:
	\[
	\mathbf h[\mathbf v] = 
	\nabla_x \Phi[\mathbf v],
	\]
	which is clearly $Q$-continuous (it is constant in $\mathbf v$).
	From Theorem \ref{existence_fundamental}, 
	we want $\mathbf v$ over $Q$ to be subject to the constraint:
	\[
	\frac{1}{2}\frac{|| \mathbf v  + 
		\mathbf h[\mathbf v]||_{\mathbb R^3}^2}{\rho[\mathbf v]} 
	= 
	e[\mathbf v](t,\cdot) \equiv \sigma[\mathbf v](t) - \frac{3}{2} \Sigma[\mathbf v](t,\cdot) \text{ a.e. on } Q.
	\]
	Take $\Sigma[\mathbf v] = \partial_t \Phi[\mathbf v]$, and assume $\sigma(\cdot)$ is continuous and bounded. We will specify $\sigma$ further later on.
	From the R.H.S., $e[\mathbf v]$ is clearly $Q$-continuous.
	We modify the momentum equation again using this ``kinetic energy'' :
	\begin{equation}
		\label{euler4}
		\begin{split}
			&\partial_t \mathbf v + 
			\nabla_x \cdot \Big [ \frac{(\mathbf v + 
				\mathbf h[\mathbf v]) 
				\odot (\mathbf v + \mathbf h[\mathbf v]) }{\rho[\mathbf v]} \Big ]  = 
			\mathbf S[\cdots]  \\
			&\mathbf S[\rho[\mathbf v], \mathbf v + 
			\mathbf h[\mathbf v] ] = 
			\rho[\mathbf v] \pi_{\mathbf j} \Big [\mathbf v + 
			\mathbf h[\mathbf v] \Big ] - ... \\
			& (\mathbf v + \mathbf h[\mathbf v]) \pi_{\rho}[\rho[\mathbf v]] - 
			\rho[\mathbf v] \nabla_x[\mathbf 1_2^\top \mathbf V[\rho[\mathbf v]] + U] + ... \\
			&\kappa_p \mathbf 
			(\mathbf v + \mathbf h[\mathbf v])(1-P(\sqrt{2}\rho^{-1/2}[\mathbf v]
			e^{1/2}[\mathbf v] ) ).
		\end{split}
	\end{equation}
	Consider now the second-order elliptic homogeneous Dirichlet problem :
	\begin{equation}
		\begin{split}
			\label{elliptic}
			-&\nabla_x \cdot \Big [ \nabla_x \mathbf w[\mathbf v] + \nabla_x^\top \mathbf w[\mathbf v] 
			- \frac{2}{3} \mathbf I \text{ tr }\nabla_x \mathbf w[\mathbf v]
			\Big] = \mathbf S[\rho[\mathbf v], \mathbf j[\mathbf v]]
			\text{ in } D; \\
			&\mathbf w[\mathbf v] = 0 \text{ on } \partial D
		\end{split}
	\end{equation}
	which is linear in $\mathbf w$. Let
	\begin{equation}
		\label{M_matrix}
		\mathbf M[\mathbf v]= \nabla_x \mathbf w[\mathbf v] + \nabla_x^\top \mathbf w[\mathbf v] 
		-  \frac{2}{3} \mathbf I \text{tr } \nabla_x \mathbf w[\mathbf v],
	\end{equation}
	which is trace-free symmetric.
	Define bilinear form $\mathbf B: H^1_0(D;\mathbb R^{3}) \times 
	H^1_0(D;\mathbb R^3) \rightarrow \mathbb R$:
	\[
	\mathbf B[ \chi, \mathbf \phi]:= \big \langle 
	\nabla_x \chi + \nabla_x^\top \chi - 
	\frac{2}{3} \mathbf I \text{ tr }\nabla_x \mathbf \chi, \nabla_x \phi
	\big \rangle_{L^2(D;\mathbb R^{3 \times 3})}.
	\]
	Over matrices of form $\xi \eta^\top, \xi, \eta \in \mathbb R^3$, we can show using 
	the elementary symmetric polynomials of the matrix characteristic polynomial:
	\[
	\langle \xi \eta^\top + \eta \xi^\top - \frac{2}{3}\text{tr } \xi \eta^\top \mathbf I, \xi \eta^\top 
	\rangle_{\mathbb R^{3 \times 3}}
	= 2||\xi \eta^\top||_{\mathbb R^{3 \times 3}}^2 - \frac{2}{3}[\text{tr }  \xi \eta^\top ]^2 
	= \frac{4}{3}||\xi \eta^\top||_{\mathbb R^{3 \times 3}}^2 
	\]
	since these matrices are rank-1. Hence, the (constant-coefficient) elliptic operator in the given problem 
	satisfies the 
	Legendre-Hadamard condition, so by G$\overset{\circ}{\text{a}}$rding's theorem \cite{pdesys}, the associated
	bilinear form is coercive. Now, we apply the Lax-Milgram theorem to conclude that the problem:
	\[
	\mathbf B[\chi,\phi] = \big( \mathbf S[\rho[\mathbf v], \mathbf v + \mathbf h[\mathbf v]], \phi \big )_{H^{-1}(D;\mathbb R^{3}), H^1_0(D;\mathbb R^{3})}
	\]
	with $\mathbf S[\rho[\mathbf v](t,\cdot), \mathbf j[\mathbf v](t,\cdot)] \in L^{2}( D;\mathbb R^3)$ 
	has a unique $H^1_0(D;\mathbb R^3)$ solution $\chi$ for each $t \in [t_0,t_f]$. Now, we apply 
	this result to $\mathbf w[\mathbf v]$, and conclude that $\mathbf w[\mathbf v] 
	\in H^1_0( D;\mathbb R^3)$ exists and is unique. 
	By boundary regularity \cite{pdesys} of the constant-coefficient problem, $\mathbf w[\mathbf v](t,\cdot) \in H^2_0(D;\mathbb R^3)$. $\mathbf w$ is also a continuous linear map from $L^p(D;\mathbb R^3)$ into $W^{2,p}(D;\mathbb R^3)$. See Lemma 7.1., Corr. 7.2. of \cite{chiodaroli2016existence} for details. Since $S[\rho[\mathbf v], \mathbf v + \mathbf h[\mathbf v]]$ is also $L^\infty((t_0,t_f) \times 
	D;\mathbb R^3)$, we conclude  
	$\mathbf w[\mathbf v] \in L^\infty(t_0,t_f;W^{2,p}(D;\mathbb R^3))$ for $p \in (1,\infty)$.
	It can be verified that the operator $\mathbf S$ reformulated as we have done using $\Phi, e$ preserves weak continuity in $L^2(D;\mathbb R^3)$ and weak$^*$ continuity in $L^\infty(D;\mathbb R^3)$ for each $t \in [t_0,t_f]$. Since $\mathbf v \in C_w([t_0,t_f];L^2(D;\mathbb R^3))$, $\mathbf w \in C_w([t_0,t_f];L^2(D;\mathbb R^3))$ by continuity, so $\mathbf w$ is s.t. $\forall \psi \in L^2(D;\mathbb R^3)$:
	\[
	\infty > \langle \mathbf w(t,\cdot)\Big |_{t_0}^{t_f},\psi\rangle_{L^2(D;\mathbb R^3)} = 
	\int_D \int_{t_0}^{t_f} \partial_t (\mathbf w^\top(t,\cdot)\psi) \text{ }dt \text{ } dx = ...
	\]
	\[
	\int_{t_0}^{t_f} \int_D \partial_t \mathbf w^\top(t,\cdot)\psi \text{ }dx \text{ }dt,
	\] 
	so $\partial_t \mathbf w \in L^2(t_0,t_f;L^2(\tilde D;\mathbb R^3))$. We have the embeddings \cite{adams2003sobolev}:
	\[
	W^{2,4}(D;\mathbb R^3) \subset \subset C^1_b(D;\mathbb R^3) \subset L^2(D;\mathbb R^3).
	\]
	By the Aubin-Lions-Simon theorem (Cor. 4 of  \cite{simon1986compact}),
	\[
	\mathbf w[X] \subset \subset C([t_0,t_f];C^1_b(D;\mathbb R^3)),
	\]
	so $\mathbf M \in C_b(Q;\mathbb S^3_0)$ $\forall \mathbf v \in X$. The last property for $Q$-continuity of this operator is easily verified.
	Hence, 
	we finally have:
	\begin{equation}
		\label{euler6}
		\begin{split}
			&\partial_t \mathbf v + 
			\nabla_x \cdot \Big [ \frac{(\mathbf v + \mathbf h[\mathbf v]) 
				\odot (\mathbf v + \mathbf h[\mathbf v]) }{\rho[\mathbf v]} + \mathbf M[\mathbf v]\Big ]  = 0.
		\end{split}
	\end{equation}
	$r[\mathbf v]:=\rho[\mathbf v]$ is obviously $Q$-continuous, as is $(r[\mathbf v])^{-1}$, 
	and, as we have noted, so are $e$, $\mathbf h$, and $\mathbf M$.
	Let $\mathbf F \equiv 0$, i.e. $\mathbf v(t,\cdot) = \mathbf v_0$ a.e..
	So, the set $X_0$ is non-empty by our choices. We can select a 
	function $\sigma \in C_b([t_0,t_f];\mathbb R)$ s.t.:
	\begin{equation}
		\begin{split}
			\sup_{(t,x) \in Q, t > \tau} \frac{3}{2}\Big [\lambda_{max}\Big [ \frac{(\mathbf v + \mathbf h[\mathbf v]) 
				\odot (\mathbf v + \mathbf h[\mathbf v]) }{r[\mathbf v]} + \mathbf M[\mathbf v]\Big] 
			+  \partial_t \Phi[\mathbf v] \Big ]< 
			\sigma[\mathbf v](t)
		\end{split}
	\end{equation}
	since the L.H.S. is bounded. 
	Now, since we have selected $\sigma$, we employ the inequality similar to the one given in 
	Lemma 3.2 of \cite{de_lel_weak}:
	\[
	\frac{1}{2}\frac{||\mathbf v + \mathbf h[\mathbf v]||^{2}_{\mathbb R^3}}{r} 
	\leq \frac{3}{2} \lambda_{max} \Big (\frac{(\mathbf v + \mathbf h[\mathbf v]) (\mathbf v + \mathbf h[\mathbf v])^\top}{r} - 
	\mathbf U  \Big ) \text{ if } \mathbf U \in \mathbb S^3_0
	\]
	to conclude $X_0$ is bounded in $L^\infty((t_0,t_f) \times D;\mathbb R^3)$. We invoke Theorem \ref{existence_fundamental}  and the claim of Theorem \ref{prop1} is proven.
\end{proof}
\subsection{Bounded-Energy Weak Solutions}
We now consider a more specific variety of weak solution to this problem: the 
bounded-energy weak solution. We construct a quantity which we show is bounded along 
solutions to the the system (\ref{euler2}). First, we show some formal computations to motivate construction of the energy inequality similarly to \cite{weak_sols, feireisl2015weak}. Recall $\mathbf u = \rho^{-1}\mathbf j$. We begin:
\[
\partial_t [\rho \mathbf u^\top \mathbf u] = [\partial_t (\rho \mathbf u)]^\top \mathbf u + 
(\rho \mathbf u)^\top \partial_t \mathbf u.
\]
Multiplying the momentum equation in (\ref{euler2}) by $\mathbf u$ gives:
\[
\begin{split}
	& \mathbf u^\top \partial_t (\rho \mathbf u) + \mathbf u^\top \nabla_x \cdot [\rho \mathbf u 
	\mathbf u^\top] = \rho \mathbf u^\top \pi_{\mathbf j}[\rho \mathbf u] - 
	\rho \mathbf u^\top \mathbf u \pi_{\rho}[\rho] - ... \\
	& \rho \mathbf u^\top \nabla_x [\mathbf 1_2^\top \mathbf V[\rho] + U] 
	+ \kappa_p \rho \mathbf u^\top \mathbf u
	[1 - P(||\mathbf u||_{\mathbb R^3})].
\end{split}
\]
Substituting using the relation for $\partial_t [\rho \mathbf u^\top \mathbf u]$ yields:
\[
\begin{split}
	&\partial_t [\rho \mathbf u^\top \mathbf u] - 
	(\rho \mathbf u)^\top \partial_t \mathbf u + 
	\mathbf u^\top \nabla_x \cdot [\rho \mathbf u 
	\mathbf u^\top] = ... \\
	&\rho \mathbf u^\top \pi_{\mathbf j}[\rho \mathbf u] - \rho \mathbf u^\top \mathbf u \pi_{\rho}[\rho] -
	\rho \mathbf u^\top \nabla_x [\mathbf 1_2^\top \mathbf V[\rho] + U] 
	+ \kappa_p \rho \mathbf u^\top \mathbf u
	[1 - P(||\mathbf u||_{\mathbb R^3})].
\end{split}
\]
From the continuity and momentum equations, with $\rho > 0$, formally:
\[
\partial_t \mathbf u + (\nabla_x^\top \mathbf u)\mathbf u = 
\pi_{\mathbf j}[\rho \mathbf u] - \mathbf u \pi_{\rho}[\rho] - 
\nabla_x [\mathbf 1_2^\top \mathbf V[\rho] + U] + \kappa_p \mathbf u(1 - P(||\mathbf u||_{\mathbb R^3})).
\]
Substituting these terms into the previous computation and applying integration by parts formally, we obtain:
\[
\begin{split}
	\frac{1}{2} \frac{d}{dt}\int_{D}\rho \mathbf u^\top \mathbf u \text{ }dx 
	&= \int_{D}\rho \mathbf u^\top \pi_{\mathbf j}[\rho \mathbf u] - 
	\rho \mathbf u^\top \mathbf u \pi_{\rho}[\rho]  \text{ }dx + ... \\
	&
	\int_{D }-\rho \mathbf u^\top \nabla_x [\mathbf 1_2^\top \mathbf V[\rho] + U] 
	+ \kappa_p \rho \mathbf u^\top \mathbf u
	[1 - P(||\mathbf u||_{\mathbb R^3})] \text{ }dx.
\end{split}
\]
Notice that formally:
\[
\int_{D} \rho \mathbf u^\top \nabla_x v_r[\rho] \text{ }dx = - 
\int_{D} \nabla_x \cdot [\rho \mathbf u]  v_r[\rho] \text{ }dx 
= \frac{1}{2}\frac{d}{dt}\int_{D}  \rho v_r[\rho] \text{ }dx
\]
and similarly for $v_c$.
So, we now obtain the (strict) energy balance after re-expressing the 
equations in $(\rho, \mathbf j)$:
\begin{equation}
	\label{energy}
	\begin{split}
		&\frac{1}{2} \frac{d}{dt}\int_{D} \rho^{-1} 
		\mathbf j^\top \mathbf j
		+ \rho v_r[\rho] +  \rho v_c[\rho] \text{ }dx 
		= ... \\ 
		&\int_{D}\mathbf j^\top \pi_{\mathbf j}[\mathbf j] - 
		\rho^{-1} 
		\mathbf j^\top \mathbf j \pi_{\rho}[\rho] + 
		\mathbf j^\top \nabla_x U + 
		\kappa_p \rho^{-1} \mathbf j^\top \mathbf j 
		[1 - P(||\rho^{-1}\mathbf j||_{\mathbb R^3})] \text{ }dx.
	\end{split}
\end{equation}
We have now constructed a suitable energy quantity to study. We now relax the strictness of what we obtained to accommodate weak solutions to the system (\ref{euler2}). Similarly to \cite{weak_sols, feireisl2015weak, diss_weak}, we say:
\newline
\newline
\begin{definition}
	\label{def_3_8}
	A pair $(\rho, \mathbf j)$ is a bounded-energy
	weak solution to (\ref{euler2}) iff:
	\begin{enumerate}
		\item $(\rho,\mathbf j)$ are of type: 
		\[
		\begin{split}
			&\rho \in C^3([t_0,t_f];H^2(D;\mathbb R))\cap \text{Lip}([t_0,t_f] \times \text{cl }D;\mathbb R) , \rho > 0;  \\
			&\mathbf j \in 
			X;
		\end{split}
		\]
		\item The solutions satisfy (\ref{weak1}-\ref{weak3});
		
		\item The solution additionally satisfies the relaxed energy balance:
		\[
		\begin{split}
			&\frac{1}{2}\int_{D} \rho^{-1} 
			\mathbf j^\top \mathbf j(t,\cdot)
			+ \rho v_r[\rho](t,\cdot) +  \rho v_c[\rho](t,\cdot) \text{ }dx \text{ }\Big |_{t_0}^\tau
			\leq ... \\ 
			&\int_{t_0}^\tau \int_{D}\mathbf j^\top \pi_{\mathbf j}[\mathbf j] -
			\rho^{-1} 
			\mathbf j^\top \mathbf j \pi_{\rho}[\rho] \text{ }dx \text{ }dt + ... \\ 
			&\int_{t_0}^\tau \int_{D} \mathbf j^\top \nabla_x U +
			\kappa_p \rho^{-1} \mathbf j^\top \mathbf j 
			\Big (1 - P(||\rho^{-1}\mathbf j||_{\mathbb R^3}) \Big ) \text{ }dx \text{ }dt
		\end{split}
		\]
		a.e. on $[t_0,t_f]$.
	\end{enumerate}
\end{definition}
\begin{theorem}
	\label{prop2}  
	Consider the problem (\ref{euler2}) . 
	Suppose $\rho_0$, $\mathbf j_0$, $U$, $P$ are (as assumed earlier):
	\[
	\begin{split}
		&P \in C^2_b(\mathbb R;\mathbb R), P'(x) \geq 0 \text{ s.t. } \Phi_p \in \text{Lip}(\mathbb R^3;\mathbb R^3) , \text{ and }U \in C^2_b(D;\mathbb R),
	\end{split}
	\]
	\[
	\rho_0 \in H^2(D;\mathbb R) \cap \text{Lip}(\text{cl }D;\mathbb R), \rho_0 > 0,
	\]
	\[
	\mathbf j_0 \in \text{Lip}(\text{cl }D;\mathbb R^3).
	\]
	Then, (\ref{euler2})
	admits infinitely many bounded-energy weak solutions.
\end{theorem}
\begin{proof}
	As in Theorem \ref{prop1}, we can select 
	densities $\rho$ to be as regular as we desire. So, as before, select 
	$\rho \in C^3([t_0,t_f];H^2(D;\mathbb R))\cap \text{Lip}([t_0,t_f] \times \text{cl }D;\mathbb R)$. Then, construct potential 
	$\Phi[\mathbf v]$, and we 
	conclude that there are infinitely many fields $\mathbf v$ 
	which satisfy the abstract Euler problem in the weak sense. 
	From the construction of the energy operator $e[\mathbf v]$, we know that a.e. on $[t_0,t_f] \times D$:
	\begin{equation}
		\frac{1}{2}\frac{||\mathbf j||_{\mathbb R^3}^2}{\rho}(t,x) = e[\mathbf v](t,x) \equiv \sigma[\mathbf v](t) - 
		\frac{3}{2}\partial_t \Phi[\mathbf v](t,x). 
	\end{equation}
	So, we know that:
	\[
	\begin{split}
		\frac{1}{2} \frac{d}{dt}\int_{D} \rho^{-1} 
		\mathbf j^\top \mathbf j + 
		\rho v_r[\rho] + \rho v_c[\rho] \text{ }dx  = ... \\
		\frac{1}{2} \frac{d}{dt}\int_{D} 2(\sigma(t) - 
		\frac{3}{2} \partial_t \Phi[\mathbf v] ) + 
		\rho v_r[\rho] + \rho v_c[\rho] \text{ } dx.
	\end{split}
	\]
	Considering the R.H.S. of the strict energy balance (\ref{energy}), the 
	last term can be bounded easily as:
	\[
	\begin{split}
		\kappa_p \int_{D} \rho^{-1} 
		\mathbf j^\top \mathbf j &\Big ( 1 - P(||\rho^{-1}\mathbf j||_{\mathbb R^3}) \Big ) \text{ } dx \leq C(\kappa_p, P) \int_{D} \rho^{-1} 
		\mathbf j^\top \mathbf j  \text{ } dx = ... \\ 
		&2C(\kappa_p, P) \int_{D} 
		(\sigma[\mathbf v](t) - 
		\frac{3}{2} \partial_t \Phi[\mathbf v] ) \text{ } dx,
	\end{split}
	\]
	as $(1-P)$ is uniformly bounded.
	For the second-to-last term, we note that there exist 
	$\tilde \rho, \bar \rho$ s.t. 
	\[
	0 < \tilde \rho < \rho 
	< \bar \rho < \infty \text{ everywhere in } [t_0,t_f] \times D,
	\]
	and so:
	\[
	(\bar \rho)^{-1/2} ||f ||_{L^2(D;\mathbb R^3)}   
	\leq  ||\rho^{-1/2}(t,\cdot) f ||_{L^2(D;\mathbb R^3)}  
	\leq (\tilde \rho)^{-1/2} ||f||_{L^2(D;\mathbb R^3)}  
	\]
	for $f \in L^2(D;\mathbb R^3)$. $||\rho^{-1/2}(t,*)(\cdot)||_{L^2(D;\mathbb R^3)}$ additionally satisfies the properties of norms 	 - it is obviously non-negative, $||\rho^{1/2}(t,\cdot) f||_{L^2(D;\mathbb R^3)} = 0$ iff $f \equiv_{L^2(D;\mathbb R^3)} 0$, and satisfies: 
	\[||\rho^{-1/2}(t,\cdot) (f+g)||_{L^2(D;\mathbb R^3)} \leq ||\rho^{-1/2}(t,\cdot) f||_{L^2(D;\mathbb R^3)} + ||\rho^{-1/2}(t,\cdot) g||_{L^2(D;\mathbb R^3)}\]
	\[||C\rho^{-1/2}(t,\cdot) f||_{L^2(D;\mathbb R^3)} \leq |C| ||\rho^{-1/2}(t,\cdot) f||_{L^2(D;\mathbb R^3)}.
	\]
	So, $||\rho^{-1/2}(t,*)(\cdot)||_{L^2(D;\mathbb R^3)}$ is equivalent to the standard norm on $L^2(D;\mathbb R^3)$. Accordingly, using the Cauchy-Schwarz inequality, the equivalence we obtained, and Young's inequality:
	\[
	\int_{D} \mathbf j^\top \nabla_x U \text{ } dx 
	\leq C(U) ||\mathbf j||_{L^2(D;\mathbb R^3)}
	\leq C(U)\bar{\rho}^{1/2}||\rho^{-1/2}(t,\cdot) \mathbf j(t,\cdot) ||_{L^2(D;\mathbb R^3)} \leq ...
	\]
	\[
	\frac{1}{2}C(U,\bar \rho)^2 + \frac{1}{2}\int_{D} 
	\rho^{-1}\mathbf j^\top \mathbf j \text{ } dx = \frac{1}{2}C(U,\tilde \rho)^2 + \int_{D} 
	(\sigma[\mathbf v](t) - 
	\frac{3}{2} \partial_t \Phi[\mathbf v] ) \text{ } dx.
	\]
	On the alignment terms, we can apply the triangle inequality, Young's convolution inequality, our equivalence of norms, and
	the boundedness of $\pi_{\rho}[\rho]$ to obtain:
	\[
	\int_{D}\mathbf j^\top \pi_{\mathbf j}[\mathbf j] -
	\rho^{-1} 
	\mathbf j^\top \mathbf j \pi_{\rho}[\rho] \text{ } dx 
	\leq C(k_a,\lambda_a,\tilde \rho, \bar \rho) ||\rho^{-1/2}(t,\cdot) \mathbf j(t,\cdot) ||_{L^2(D;\mathbb R^3)}^2  =...
	\]
	\[
	2C(k_a,\lambda_a,\tilde \rho, \bar \rho) \int_{D} 
	(\sigma[\mathbf v](t) - 
	\frac{3}{2} \partial_t \Phi[\mathbf v] ) \text{ } dx
	\]
	so we arrive at:
	\[
	\begin{split}
		\frac{1}{2} \frac{d}{dt}\int_{D} 2(\sigma[\mathbf v](t) - 
		&\frac{3}{2} \partial_t \Phi[\mathbf v] ) + 
		\rho v_r[\rho] + \rho v_c[\rho] \text{ } dx \leq ... \\
		&C_1(\kappa_p, P, U,  \tilde \rho, \bar \rho, k_a, \lambda_a) (1 + \int_{D} 
		(\sigma[\mathbf v](t) - 
		\frac{3}{2} \partial_t \Phi[\mathbf v] ) \text{ } dx).
	\end{split}
	\]
	We move the potential terms to the R.H.S., which gives:
	\[
	\begin{split}
		&\frac{d}{dt}\int_{D} \sigma[\mathbf v](t) \text{ } dx = l(D) \frac{d}{dt}\sigma[\mathbf v](t) 
		\leq ... \\ 
		&C_1(\cdots) \Big (1 + \int_{D} 
		\sigma[\mathbf v](t) + 
		|\frac{3}{2} \partial_t \Phi[\mathbf v]|  \text{ } dx \Big ) + ... \\
		&\Big | \frac{1}{2}\frac{d}{dt}\int_{D} \rho v_r[\rho] + 
		\rho v_c[\rho] - 3\partial_t \Phi[\mathbf v] \text{ } dx \Big | 
		\leq 
		C_2(D,C_1(\cdots))(1 + \sigma[\mathbf v](t))
	\end{split}
	\]      
	by time-differentiability and spatial boundedness in $D$ of the relevant functions. These spatial bounds depend only on the choice of $\tilde \rho, \bar \rho$ which the relevant bounds on $\rho$. Thus, selection of $\sigma$ which does not increase `too quickly' to satisfy a bound on the energy is feasible. It remains to choose such a $\sigma$. We do so similarly to \cite{weak_sols}
	Let $\sigma_0, b$ be positive, real constants sufficiently large so that the differential inequality we obtained is satisfied by $\sigma$ of form:
	\[
	\sigma[\mathbf v](t) = \sigma_0 + \text{exp}(-b(t-t_0)) > \frac{3}{2}||\partial_t \Phi[\mathbf v]||_{C_b([t_0,t_f] \times D;\mathbb R)},
	\]
	so $e[\mathbf v](t) > 0$, and that
	\begin{equation}
		\begin{split}
			&\frac{1}{2} \frac{d}{dt}\int_{D} \rho^{-1} 
			\mathbf j^\top \mathbf j
			+ \rho v_r[\rho] +  \rho v_c[\rho] \text{ } dx 
			\leq ... \\ 
			&\int_{D}\mathbf j^\top \pi_{\mathbf j}[\mathbf j] - 
			\rho^{-1} 
			\mathbf j^\top \mathbf j \pi_{\rho}[\rho] + 
			\mathbf j^\top \nabla_x U +
			\kappa_p \rho^{-1} \mathbf j^\top \mathbf j 
			[1 - P(||\rho^{-1}\mathbf j||_{\mathbb R^3})] \text{ } dx.
		\end{split}
	\end{equation}
	It remains to examine if the L.H.S. is discontinuous at $t_0$. To conclude it is not, we use a slightly modified version of Theorem 6.1 of \cite{feireisl2015weak} given as Lemma 6.2 in \cite{chiodaroli2016existence}. The proof is identical to the one given in \cite{feireisl2015weak}.
	\begin{theorem}
		Let $\mathbf M$ as defined in (\ref{M_matrix}) satisfy the hypotheses of Theorem \ref{existence_fundamental}, and suppose 
		\[
		l(\{x \in D : (t,x) \in Q \}) = l(D).
		\] 
		Then, $\exists$ a dense subset 
		$\mathcal R \subset (t_0,t_f)$ s.t. $\forall t \in \mathcal R$ 
		$\exists$ $\mathbf v \in X$ s.t.:
		\[
		\mathbf v \in C(((t_0,t)\cup (t,t_f))\times D;\mathbb R^3) 
		\cap C_w([t_0,t_f];L^2(D;\mathbb R^3));
		\]
		\[
		\mathbf v(t_0,\cdot) = \mathbf v_0;
		\]
		\[
		\exists \mathbf F \in C(((t_0,t)\cup (t,t_f))\times D;\mathbb S^3_{0})  \text{ s.t. } (\mathbf v, \mathbf F)
		\]
		is an admissible subsolution-flux pair;
		\[
		(e[\mathbf v] - \eta[\mathbf v, \mathbf F])(t,x) > 0 \text{ in }
		((t_0,t)\cup (t,t_f))\times D;
		\]
		\[
		\frac{1}{2}\frac{|| \mathbf v  + 
			\mathbf h[\mathbf v]||_{\mathbb R^3}^2}{\rho[\mathbf v]} 
		= 
		e[\mathbf v] \text{ a.e. in }
		((t_0,t)\cup (t,t_f))\times D.
		\]
	\end{theorem}
	Thus, the claim of Theorem \ref{prop2} follows with $Q = (t_0,t_f) \times D$.
\end{proof}
\subsection{Weak-Strong Uniqueness}
We have established that there are solutions which satisfy energy bounds. Now, we show that if there exists a strong solution within the larger class of bounded-energy weak solutions, it is unique.

Our approach is slightly different than the usual ones taken for weak-strong uniqueness \cite{}. Generally, for compressible Euler systems similar to \ref{euler2} with (infinitely many) distributional solutions $(\rho, \mathbf j)$, we would show that any strong solution $(r,\mathbf i)$ would be unique in the class of weak solutions, assuming it exists.

However, we are not even interested in the case where $\rho$ is distributional - our definition of weak solution in (\ref{class_of_sols} - \ref{weak3}) is already restricted to only admit strong solutions for $\rho$. So, our weak-strong uniqueness principle is only for $\mathbf j$, reflected in our choice of relative energy.

\begin{theorem}
	\label{prop3}  
	Consider the problem (\ref{euler2}), and suppose $\rho_0$, $\mathbf j_0$, $U$, $P$ are (as assumed earlier):
	\[
	\begin{split}
		&P \in C^2_b(\mathbb R;\mathbb R), P'(x) \geq 0 \text{ s.t. } \Phi_p \in \text{Lip}(\mathbb R^3;\mathbb R^3) , \text{ and }U \in C^2_b(D;\mathbb R),
	\end{split}
	\]
	\[
	\rho_0 \in H^2(D;\mathbb R) \cap \text{Lip}(\text{cl }D;\mathbb R), \rho_0 > 0,
	\]
	\[
	\mathbf j_0 \in \text{Lip}(\text{cl }D;\mathbb R^3).
	\] Select a density of type: 
	\[
	\rho \in C^3([t_0,t_f];H^2(D;\mathbb R)) 
	\cap \text{Lip}([t_0,t_f] \times \text{cl }D;\mathbb R), \rho > 0, \rho(t_0,\cdot) = \rho_0,
	\]
	and apply the results of Theorem \ref{prop2} to obtain infinitely many bounded-energy weak solutions $(\rho,\mathbf j)$. Then, assuming there exists a momentum field
	\[
	(r, \mathbf i) \in \text{Lip}([t_0,t_f] \times \text{cl }D;\mathbb R \times \mathbb R^3), 
	(r,\mathbf i)(t_0,\cdot) = (\rho_0, \mathbf j_0)
	\] 
	which is a strong solution to (\ref{euler2}), every $(\rho, \mathbf j)$ corresponding to a bounded-energy weak solution is s.t.
	\[
	(\rho,\mathbf j) = (r, \mathbf i) \text{ a.e. in } [t_0,t_f] \times D.
	\]
\end{theorem}
\begin{proof}
	To prove this result, we define a relative energy quantity. 
	Let $\mathbf u = \rho^{-1}\mathbf j$, 
	$\tilde {\mathbf u}:= r^{-1} \mathbf i$. Note that $\tilde{ \mathbf u}\in \text{Lip}([t_0,t_f] \times \text{cl }D;\mathbb R^3)$.
	Let \[\mathbf E:\Big ( \text{Lip}([t_0,t_f] \times \text{cl }D;\mathbb R) \times X \Big ) \times \Big ( \text{Lip}([t_0,t_f] \times \text{cl }D;\mathbb R \times \mathbb R^3) \Big ) \times [t_0,t_f] 
	\rightarrow \mathbb R\]
	be:
	\[
	\mathbf E[(\rho, \mathbf j), (r, \mathbf i)](t):= 
	\frac{1}{2}\int_{D} \rho(t,\cdot) ||\mathbf u(t,\cdot) 
	- \tilde{\mathbf  u}(t,\cdot) ||_{\mathbb R^3}^2 \text{ } dx + \frac{1}{2}||\rho(t,\cdot) - r(t,\cdot)||_{L^2(D;\mathbb R)}^2.
	\]
	$||\rho^{1/2}(t,\cdot) f||_{L^2(D;\mathbb R^3)}$ with $f \in L^2(D;\mathbb R^3)$ is in fact a norm - it is obviously non-negative, $||\rho^{1/2}(t,\cdot) f||_{L^2(D;\mathbb R^3)} = 0$ iff $f \equiv_{L^2(D;\mathbb R^3)} 0$, and satisfies: 
	\[||\rho^{1/2}(t,\cdot) (f+g)||_{L^2(D;\mathbb R^3)} \leq ||\rho^{1/2}(t,\cdot) f||_{L^2(D;\mathbb R^3)} + ||\rho^{1/2}(t,\cdot) g||_{L^2(D;\mathbb R^3)}\]
	\[||C\rho^{1/2}(t,\cdot) f||_{L^2(D;\mathbb R^3)} \leq |C| ||\rho^{1/2}(t,\cdot) f||_{L^2(D;\mathbb R^3)},
	\]	
	so it is a norm over $L^2(D;\mathbb R^3)$, and indeed, using the constants $\tilde \rho, \bar \rho$ which we defined earlier in the proof of Theorem \ref{prop2}:	
	\[
	(\tilde \rho)^{1/2} ||f ||_{L^2(D;\mathbb R^3)}   
	\leq  ||\rho^{1/2}(t,\cdot) f ||_{L^2(D;\mathbb R^3)}  
	\leq (\bar\rho)^{1/2} ||f||_{L^2(D;\mathbb R^3)},
	\]
	hence it is equivalent to the standard norm over $L^2(D;\mathbb R^3)$.
	Now, as in \cite{weak_sols}, we need to determine suitable 
	remainder terms to construct estimates of $\mathbf E[\cdots](t)|_{t_0}^\tau, \tau \in [t_0,t_f]$. To that end, first assume that $(r, \mathbf i)$ are also a bounded-energy weak solution to the Euler alignment system, i.e. they satisfy Definition \ref{def_3_8} in addition to being a strong solution s.t.:
	\[
	\partial_t r + \nabla_x \cdot \mathbf i = 0 \text{ a.e. in } [t_0,t_f] \times D.
	\]
	
	We copy the energy inequality now expressed in $(\rho,\mathbf u)$ (suppressing arguments):
	\[
	\begin{split}
		\frac{1}{2}\int_{D}\rho {\mathbf u}^\top {\mathbf u} + 
		\rho v_r[\rho] &+ 
		\rho v_c[\rho] \text{ } dx \Big |_{t_0}^\tau \leq ... \\
		&\int_{t_0}^\tau \int_{D} \rho {\mathbf u}^\top \pi_{\mathbf j}[\rho 
		{\mathbf u} ] - \rho {\mathbf u}^\top {\mathbf u} \pi_{\rho}[\rho] + 
		\rho \mathbf u^\top \nabla_x U \text{ } dx \text{ } dt + ... \\
		& \int_{t_0}^\tau \int_{D}  \kappa_p \rho \mathbf u^\top \mathbf u  
		\Big (1 - P(||\mathbf u||_{\mathbb R^3}) \Big ) \text { } dx \text { } dt.
	\end{split} 
	\]
	
	We have:
	\[
	\frac{1}{2}\int_{D}\rho v_r[\rho] \text{ } dx \text{ }  \Big |_{t_0}^\tau = \int_{t_0}^{\tau}\int_{D} \partial_t \rho v_r[\rho] \text{ } dx \text{ } dt = 
	-\int_{t_0}^{\tau}\int_{D} \rho \mathbf u^\top \nabla_x v_r[\rho] \text{ } dx \text{ } dt, 
	\]
	and similarly for the term containing $v_c$,
	so the energy inequality becomes:
	\[
	\begin{split}
		\frac{1}{2}\int_{D}\rho {\mathbf u}^\top {\mathbf u} \text{ } dx \Big |_{t_0}^\tau &\leq \int_{t_0}^\tau \int_{D} \rho {\mathbf u}^\top \pi_{\mathbf j}[\rho 
		{\mathbf u} ] - \rho {\mathbf u}^\top {\mathbf u} \pi_{\rho}[\rho] + 
		\rho \mathbf u^\top \nabla_x U \text{ } dx \text{ } dt + ... \\
		& \int_{t_0}^\tau \int_{D}  
		\kappa_p \rho \mathbf u^\top \mathbf u  
		\Big (1 - P(||\mathbf u||_{\mathbb R^3})\Big ) \text{ } dx \text{ } dt + ...\\ 
		& \int_{t_0}^{\tau}\int_{D} \rho \mathbf u^\top \nabla_x v_r[\rho] \text{ } dx \text{ } dt + \int_{t_0}^{\tau}\int_{D} \rho \mathbf u^\top \nabla_x v_c[\rho] \text{ } dx \text{ } dt.
	\end{split} 
	\]
	These computations were rather formal, but can be justified rigorously by testing on one hand the standard-mollified potentials $v_r^\epsilon := v_r[\rho]*\eta(\cdot;\epsilon)$ against the continuity equation and passing to the limit $\epsilon \rightarrow 0$, and on the other hand applying the fundamental theorem of calculus and radial symmetry of the convolution kernel. 
	
	To begin estimating $\mathbf E[\cdots](t)|_{t_0}^\tau, \tau \in [t_0,t_f]$, similarly to \cite{feireisl2015weak}, first multiply the momentum 
	equation by $\tilde{\mathbf u}$ and integrate:
	\[
	\begin{split}
		\int_{D} \rho {\mathbf u}^\top \tilde{\mathbf u} \text{ } dx \text { }
		\Big |_{t_0}^\tau = &\int_{t_0}^\tau \int_{D}
		\rho \mathbf u^\top \partial_t \tilde{\mathbf u} + \langle \rho {\mathbf u} {\mathbf u}^\top, 
		\nabla_x \tilde{\mathbf u} \rangle_{\mathbb R^{3 \times 3}} \text{ } dx \text { } dt - ... \\
		&\int_{t_0}^\tau \int_{D} \rho  \nabla_x^\top( v_r[\rho] + v_c[\rho] + U) \tilde{\mathbf u} \text{ } dx \text { } dt + ... \\
		&\int_{t_0}^\tau \int_{D} 
		\rho \tilde{\mathbf u}^\top \pi_j[\rho {\mathbf u}] - 
		\rho {\mathbf u}^\top \tilde{\mathbf u} \pi_{\rho}[\rho] \text{ } dx \text { } dt + ... \\
		& \int_{t_0}^\tau \int_{D} \kappa_p \rho \mathbf u^\top \tilde{\mathbf u} \Big (1-P(||\mathbf u||_{\mathbb R^3})\Big ) \text{ }dx \text { } dt.
	\end{split}
	\]
	Then, multiply the continuity equation by 
	$\frac{1}{2}\tilde{\mathbf u}^\top \tilde{\mathbf u}$ and integrate:
	\[
	\begin{split}
		\frac{1}{2}\int_{D} \rho \tilde{\mathbf u}^\top \tilde{\mathbf u} \text{ } dx \text { }
		\Big |_{t_0}^\tau = &\int_{t_0}^\tau \int_{D}
		\rho  \tilde{\mathbf u}^\top \partial_t \tilde {\mathbf u} + 
		\rho \mathbf u^\top (\nabla_x^\top \tilde{\mathbf u}) \tilde{\mathbf u} 
		\text{ } dx \text { } dt.
	\end{split}
	\]
	\newline
	Again, these computations were a bit formal, but we can justify them as before using a mollification procedure on $\tilde{\mathbf u}, \text{ and }\frac{1}{2}\tilde{\mathbf u}^\top \tilde{\mathbf u}$.
	The evolution of the $\rho \mathbf u^\top \mathbf u$ term is provided by the energy inequality. 
	Then, combining these computations with the energy inequality, we obtain:
	\[
	\mathbf E[(\rho,\mathbf j),(r, \mathbf i)](t) \Big |_{t_0}^\tau 
	\leq 
	\int_{t_0}^\tau \mathbf R[(\rho,\mathbf j),(r, \mathbf i)](t) \text { } dt
	\]
	where $\mathbf R:\Big (\text{Lip}([t_0,t_f] \times \text{cl }D;\mathbb R) \times X \Big) \times \text{Lip}([t_0,t_f] \times \text{cl }D;\mathbb R \times \mathbb R^3) \times [t_0,t_f] 
	\rightarrow \mathbb R$ is similarly to \cite{weak_sols, feireisl2015weak}:
	\[
	\begin{split}
		&\mathbf R[(\rho,\mathbf j),(r, \mathbf i)](\cdot) := \int_{D}
		\rho [\partial_t \tilde{\mathbf u} +  (\nabla_x^\top \tilde{\mathbf u})\mathbf u]^\top
		(\tilde{\mathbf u} - {\mathbf u}) \text { } dx + ... \\ 
		&\int_{D} \rho  (\mathbf u - \tilde{\mathbf u})^\top
		\nabla_x v_r [\rho]  + \rho  (\mathbf u - \tilde{\mathbf u})^\top
		\nabla_x v_c [\rho] \text { } dx + ... \\
		&\int_{D} ( \pi^\top_{\mathbf j}[
		\rho {\mathbf u}] -  {\mathbf u}^\top 
		\pi_{\rho}[\rho])\rho({\mathbf u} -  \tilde{\mathbf u} ) \text { } dx + ... \\
		&\int_{D} \rho (\mathbf u - \tilde{\mathbf u})^\top \nabla_x U \text { } dx + ... \\ 
		& \int_{D }\kappa_p (1-P(||\mathbf u||_{\mathbb R^3})\rho \mathbf u^\top (\mathbf u - 
		\tilde{\mathbf u})) \text { } dx. 
	\end{split}
	\]
	We use the Lipschitz solution to write:
	\[
	\begin{split}
		&\rho [\partial_t \tilde{\mathbf u} +  (\nabla_x^\top \tilde{\mathbf u})\mathbf u]^\top
		(\tilde{\mathbf u} - {\mathbf u}) = ... \\  
		& \rho(\tilde{\mathbf u} - {\mathbf u})^\top 
		(\nabla_x^\top \tilde{\mathbf u}) (\tilde{\mathbf u} - {\mathbf u}) + 
		\rho (\partial_t \tilde{\mathbf u}^\top + \tilde{\mathbf u}^\top \nabla_x\tilde{\mathbf u}  )
		(\tilde{\mathbf u} - {\mathbf u}).
	\end{split}
	\]
	For the first term, since $\tilde{\mathbf u} \in \text{Lip}([t_0,t_f] \times \text{cl }D;\mathbb R^3)$, it follows we can select some constant s.t. $||\tilde{\mathbf u}||_{\text{Lip}([t_0,t_f] \times \text{cl } D;\mathbb R^3)} < \mathfrak v$.
	\[
	\begin{split}
		&\int_{D} \rho({\mathbf u} - \tilde{\mathbf u})^\top 
		\nabla_x^\top \tilde{\mathbf u} ({\mathbf u} - \tilde{\mathbf u}) \text { } dx
		\leq 2 \mathfrak v
		\int_{D} \frac{1}{2}\rho ||\mathbf u 
		- \tilde{\mathbf  u} ||_{\mathbb R^3}^2 \text { } dx 
		\leq C(\mathfrak v) \mathbf E[\cdots](t) \text { } dt.
	\end{split}
	\]
	For the second term, we insert:
	\[
	\begin{split}
		\partial_t \tilde{\mathbf u} + (\nabla_x^\top \tilde{\mathbf u}) &\tilde{\mathbf u} 
		= \pi_{\mathbf j}[r \tilde{\mathbf u}] - 
		\tilde{\mathbf u} \pi_{\rho}[r] - \nabla_x(v_r[r] + 
		v_c[r] + U) + \kappa_p \tilde{\mathbf u} (1-P(|| \tilde{\mathbf u} ||_{\mathbb R^3})).
	\end{split}
	\]
	The relative energy inequality becomes:
	\[
	\mathbf E[(\rho,\mathbf j),(r, \mathbf i)](t) \Big |_{t_0}^\tau 
	\leq 
	\int_{t_0}^\tau \tilde{\mathbf R}[(\rho,\mathbf j),(r, \mathbf i)](t) + C \mathbf E[(\rho,\mathbf j),(r, \mathbf i)](t) \text { } dt
	\]
	with:
	
	\[
	\begin{split}
		&\tilde{\mathbf R}[(\rho,\mathbf j),(r, \mathbf i)](\cdot) := 
		\int_{D} \rho  (\mathbf u - \tilde{\mathbf u})^\top
		\nabla_x v_r [\rho-r]  + \rho  (\mathbf u - \tilde{\mathbf u})^\top
		\nabla_x v_c [\rho-r] \text { } dx + ... \\
		&\int_{D} ( \pi_{\mathbf j}[
		\rho {\mathbf u} - r \tilde {\mathbf u}] -  {\mathbf u} 
		\pi_{\rho}[\rho] + \tilde {\mathbf u} \pi_{\rho}[r])^\top \rho({\mathbf u} -  \tilde{\mathbf u} ) \text { } dx + ... \\
		& \int_{D }\kappa_p [(1-P(||\mathbf u||_{\mathbb R^3}) \mathbf u - (1-P(||\tilde {\mathbf u}||_{\mathbb R^3}) \tilde {\mathbf u})^\top \rho (\mathbf u - 
		\tilde{\mathbf u}))] \text { } dx. 
	\end{split}
	\]
	The function $\Phi_p(v)= v(1-P(||v||_{\mathbb R^3}))$ is 
	globally Lipschitz. Denote the Lipschitz constant by $L(\Phi_p)$. So, we obtain a bound on the 
	final integral using this Lipschitz constant:
	\[
	\begin{split}
		&\int_{D }\kappa_p [(1-P(||\mathbf u||_{\mathbb R^3})\mathbf u^\top -
		(1-P(||\tilde{\mathbf u}||_{\mathbb R^3})
		\tilde{\mathbf u}^\top
		]\rho(\mathbf u - 
		\tilde{\mathbf u}) \text { }  dx 
		\leq ... \\ 
		&2\kappa_p L(\Phi_p) \int_{D } \frac{1}{2} \rho ||\mathbf u - 
		\tilde{\mathbf u}||_{\mathbb R^3}^2 dx = C(\kappa_p,P) \mathbf E[\cdots](t) \text { } dt.
	\end{split}
	\]
	Define $\mathfrak B : L^2(t_0,t_f; L^2 (\text{cl }D;\mathbb R \times \mathbb R^3)) \rightarrow L^2(t_0,t_f; L^2 (\text{cl }D;\mathbb R^3))$ as:
	\[
	\mathfrak B[\rho,\mathbf u]:= 
	\pi_{\mathbf j}[\rho {\mathbf u}] -  {\mathbf u} 
	\pi_{\rho}[\rho].
	\]
	By construction of $\pi_{\mathbf j}$ and via Fourier multipliers, for $(\rho, \mathbf j)$ which also satisfy the definition of a bounded-energy weak solution in (\ref{weak1}) - (\ref{weak3}), and for $\mathbf u = \rho^{-1} \mathbf j$:
	\[
	\pi_{\mathbf j}[\rho {\mathbf u}] \leq \frac{k_a}{\lambda_a} ||\rho \mathbf u||_{L^2(t_0,t_f;L^2(D;\mathbb R^3))} = C(\bar \rho, k_a, \lambda_a)||\mathbf u||_{L^2(t_0,t_f;L^2(D;\mathbb R^3))}
	\]
	and:
	\[
	{\mathbf u} 
	\pi_{\rho}[\rho] \leq C(\bar \rho, k_a, \lambda_a)||\mathbf u||_{L^2(t_0,t_f;L^2(D;\mathbb R^3))}.
	\]
	
	Let $Y$ be the set of functions which satisfy Definition \ref{def_3_8}. $\mathfrak B$ Since $\mathfrak B$ is bilinear over $L^2(t_0,t_f; L^2(D;\mathbb R \times \mathbb R^3))$ and bounded over $Y \subset \text{dom } \mathfrak B \subset L^2(t_0,t_f; L^2(D;\mathbb R \times \mathbb R^3)) $, it is rather trivial to show that in any ball $B(0, \epsilon) \subset Y $,
	the functional $\mathfrak B \Big |_Y$ (the restriction of $\mathfrak B$ to $Y$) is s.t.:
	\[
	\begin{split}
		&||\mathfrak B \Big |_Y[\rho -r, \mathbf u - \tilde{\mathbf u}]||_{L^2(t_0,t_f; L^2 (\text{cl }D;\mathbb R^3))} \leq ... \\ 
		&L(\epsilon;\tilde \rho, \bar \rho, k_a, \lambda_a) \Big (||\rho - r||_{L^2(t_0,t_f; L^2 (\text{cl }D;\mathbb R))} + ||\mathbf u - \tilde{\mathbf u}||_{L^2(t_0,t_f; L^2 (\text{cl }D;\mathbb R^3))} \Big ).
	\end{split}
	\]
	
	Now, for the alignment terms:
	\[
	\begin{split}
		&\int_{D} ( \pi_{\mathbf j}[
		\rho {\mathbf u} - r \tilde {\mathbf u}] -  {\mathbf u} 
		\pi_{\rho}[\rho] + \tilde {\mathbf u} \pi_{\rho}[r])^\top \rho({\mathbf u} -  \tilde{\mathbf u} ) \text { } dx = ... \\
		& \int_{D} B[\rho - r, \mathbf u - \tilde{\mathbf u}]^\top \rho (\mathbf u - \tilde{\mathbf u}) \text { } dx.
	\end{split}
	\]
	We can conclude using the previous result and the earlier equivalence of norms we showed earlier that:
	\[
	\begin{split}
		\int_{D} ( \pi_{\mathbf j}[
		\rho {\mathbf u} - r \tilde {\mathbf u}] - & {\mathbf u} 
		\pi_{\rho}[\rho] + \tilde {\mathbf u} \pi_{\rho}[r])^\top \rho({\mathbf u} -  \tilde{\mathbf u} ) \text { } dx \leq ... \\
		L(\epsilon;\tilde \rho, \bar \rho, k_a, \lambda_a)&\Big (||\rho - r||_{L^2(t_0,t_f;L^2(D;\mathbb R))}^2 + ||\mathbf u - \tilde{\mathbf u}||_{L^2(t_0,t_f;L^2(D;\mathbb R^3))}^2 \Big ) \leq ... \\ 
		&C(\cdots) \mathbf E[(\rho,\mathbf j),(r, \mathbf i)](t)
	\end{split}
	\]
	where $\epsilon := \max \{||(\rho, \mathbf u)||_{L^2(t_0,t_f; L^2 (\text{cl }D;\mathbb R \times \mathbb R^3))}, ||(r, \tilde{\mathbf u})||_{L^2(t_0,t_f; L^2 (\text{cl }D;\mathbb R \times \mathbb R^3))}\}$. Finally, for the first term, via Young's convolution inequality and equivalence of norms:
	\[
	\begin{split}
		&\int_{D} \rho  (\mathbf u - \tilde{\mathbf u})^\top
		\nabla_x v_r [\rho-r] \text{ } dx  \leq ... \\ 
		&\bar \rho ||\mathbf u - \tilde{ \mathbf u}||_{L^2(D;\mathbb R^3)} ||\nabla_x G(\cdot, k_r, \lambda_r)||_{L^1(D;\mathbb R^3)}||\rho - r||_{L^2(D;\mathbb R)} \leq ... \\
		&\frac{C(\bar \rho, k_r,\lambda_r)}{2} \Big (||\mathbf u - \tilde{ \mathbf u}||_{L^2(D;\mathbb R^3)}^2 +  ||\rho - r||_{L^2(D;\mathbb R)}^2 \Big) \leq ... \\ 
		& C(\tilde \rho, \bar \rho, k_r, \lambda_r) \mathbf  E[(\rho, \mathbf j),(r, \mathbf i)](t).
	\end{split}
	\]
	We perform a virtually identical procedure on the terms containing $\nabla_x v_c[\rho - r]$.
	We have successfully bounded all terms of the remainder by the relative energy. Finally, we have arrived at:
	\[
	\mathbf E[(\rho, \mathbf j), (r, \mathbf i)](t) \Big |_{t_0}^\tau 
	\leq C(\cdots) \int_{t_0}^\tau \mathbf E[(\rho, \mathbf j), (r, \mathbf i)](t) \text{ } dt,
	\]
	with positive constant $C$ independent of $\mathbf E$.
	The relative energy is obviously non-negative and initially zero. Since the inequality we have obtained holds for a.e. $t \in [t_0,t_f]$, 
	by the Gr\"onwall-Bellman inequality, we conclude that $\mathbf E[\cdots](t) = 0$ a.e. on $[t_0,t_f]$. Since $\mathbf E$ is comprised of norms, the claims of Theorem \ref{prop3} follow. This completes the proof.
\end{proof}

\section{Generalizations and Corollaries}
\label{section4}
In this section, we present two extensions to our model. First, we give a model of a swarm subject to an Euler alignment system which follows some controlled leader-agents whose position-velocity dynamics satisfy controlled ODEs. This gives a sparsely controlled system. We use our previous results to sketch a proof of existence of bounded-energy weak solutions and weak-strong uniqueness for this new model. Then, we present some results where the Yukawa potential of our original model has been replaced with Bessel potentials of fractional order $s>2$, and similarly use our previous results and Bessel potential spaces to sketch proofs for existence of bounded-energy weak solutions and weak-strong uniqueness.
\subsection{Sparse Control of the Density by Leaders}
Consider ODEs for positions and velocities of leaders $(\xi_i,\upsilon_i):[t_0,t_f] \rightarrow \mathbb R^3 \times \mathbb R^3$, $i = 1, 2, ..., k$:
\begin{equation}
	\label{ode_leader}
	\begin{cases}
		\dot \xi_i(t)  = \upsilon_i(t) \\
		\dot \upsilon_i(t) = \mathbf F(\xi_i(t),\upsilon_i(t), f_i(t);\rho(t,\cdot))
	\end{cases}
\end{equation}
where $f_i \in \text{PWC}([t_0,t_f];\mathbb R^3)$ is a control for the $i$-th leader, and $\rho \in C([t_0,t_f] \times D;\mathbb R)$ a probability density in space at each time $t \in [t_0,t_f]$.

Suppose that $(\upsilon,\mathbf F)$ leaves invariant the set $(\tilde D \subset D) \times 
B(0,s)$, $s>0$ sufficiently large, with $\tilde D$ a nonempty, closed, connected, smooth subset, i.e.
$$
\upsilon^\top \nu_\xi + \mathbf F^\top(\xi,\upsilon, f(t), \rho(t,\cdot)) \nu_\upsilon \leq 0 \text{ on } \partial (\tilde D \times B(0,s)) \text{ } \forall t \in [t_0,t_f]
$$
for $f, \rho$ arbitrary.
Then, if each $(\xi_i(t_0),\upsilon_i(t_0))^\top = (\xi_i^0, \upsilon_i^0)^\top \in \tilde D \times B(0,s)$, there is a unique classical solution to the ODEs bound to $\tilde D \times B(0,s)$ for each $i$. Now, couple the given ODEs to the Euler alignment system 
through the potential $U$:
\begin{equation}
	\begin{cases}\label{euler_leader}
		\partial_t \rho + \nabla_x \cdot \mathbf j = 0 \text{ in } (t_0,t_f] \times D \\
		\partial_t \mathbf j + \nabla_x \cdot [\rho^{-1}\mathbf j \mathbf j^\top] = 
		\mathbf S[\rho, \mathbf j]  \text{ in } (t_0,t_f] \times D \\
		\Lambda_a \mathbf \pi_\rho[\rho] = \mathbb I_D \rho \text{ in } \mathbb R^3\\
		\Lambda_a \mathbf \pi_{\mathbf j}[\mathbf j]= \mathbb I_D \mathbf j \text{ in } \mathbb R^3 \\
		\Lambda_c v_c[\rho] = \mathbb I_D \rho \text{ in } \mathbb R^3  \\
		\Lambda_r v_r[\rho] = \mathbb I_D \rho \text{ in } \mathbb R^3 \\
		\mathbf S[\rho, \mathbf j] := 
		\rho \pi_{\mathbf j}[\mathbf j] - \mathbf j \pi_\rho[\rho] - \rho \nabla_x[\mathbf 1_2^\top \mathbf V[\rho] + U(\cdot; \{ \xi_i(\cdot)\}_{i=1}^k)] 
		+ ... \\
		\kappa_p \mathbf j (1-P(\rho^{-1}||\mathbf j||_{\mathbb R^3} )) \\
		\mathbf j^\top \mathbf \nu = 0 \text{ on } \partial D \times [t_0,t_f] \\ 
		(\rho(t_0,\cdot), \mathbf j(t_0,\cdot)) = (\rho_0, \mathbf j_0) \text{ in } D.
	\end{cases}
\end{equation}
We have the following result: 
\begin{theorem}
	Consider the coupled ODE-PDE system (\ref{ode_leader}, \ref{euler_leader}) and suppose that $U(\cdot;\{ \xi_i\}_{i=1}^k)$ is additionally smooth and bounded w.r.t. $\{ \xi_i\}_{i=1}^k$, and that:
	\[
	\sup_{x \in D, \{ \xi_i\} \in D^k} ||\nabla_x U(x;\{ \xi_i\}_{i=1}^k)||_{\mathbb R^3} 
	< \bar U_g,
	\] 
	$\bar U_g$ a positive constant.
	Then, taking the other hypotheses of Theorems \ref{prop1}, \ref{prop2}, \ref{prop3}, their conclusions hold for this coupled ODE-PDE system.
\end{theorem}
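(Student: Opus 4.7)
The plan is to exploit the one-way nature of the coupling that emerges once we commit to the construction used in Theorem \ref{prop1}: there the density $\rho$ is selected freely within a regularity class and only afterwards is the momentum $\mathbf{j}$ assembled. This flexibility lets us solve the leader ODEs first against a pre-chosen $\rho$, feed the resulting trajectories into the confinement potential, and then invoke the abstract Euler machinery with a time-varying but prescribed $\tilde U$.

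Concretely, I would proceed in three steps. First, fix any $\rho \in C^3([t_0,t_f];H^2(D;\mathbb R))\cap\text{Lip}([t_0,t_f]\times\text{cl }D;\mathbb R)$ with $\rho>0$, $\rho(t_0,\cdot)=\rho_0$, and $\int_D\rho(t,\cdot)\,dx=1$. Second, inserting this $\rho$ into (\ref{ode_leader}) turns its right-hand side into a Carath\'eodory field of $(t,\xi_i,\upsilon_i)$: piecewise continuous in $t$ via $\rho\in\text{Lip}$ and $f_i\in PWC$, and Lipschitz in $(\xi_i,\upsilon_i)$ under the tacit regularity of $\mathbf F$. Combined with the hypothesized invariance of $\tilde D\times B(0,s)$, Picard-Lindel\"of-Carath\'eodory theory yields unique absolutely continuous solutions $(\xi_i,\upsilon_i)\in C([t_0,t_f];\tilde D\times B(0,s))$. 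Third, define
\[
\tilde U(t,x):=U(x;\{\xi_i(t)\}_{i=1}^k),
\]
which, by the new hypothesis, is smooth in $x$, continuous in $t$, and satisfies $||\nabla_x\tilde U||_{\mathbb R^3}<\bar U_g$ uniformly on $[t_0,t_f]\times D$. One then reruns Theorems \ref{prop1}, \ref{prop2}, \ref{prop3} with $\tilde U$ in place of $U$.

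For Theorem \ref{prop1}, the term $-\rho\nabla_x\tilde U$ enters $\mathbf S[\rho,\mathbf j]$ as a function independent of $\mathbf v$, so it perturbs the Lax-Milgram source in (\ref{elliptic}) by a fixed element of $C_b([t_0,t_f]\times D;\mathbb R^3)$, preserving $Q$-continuity of $\mathbf M$ through the Aubin-Lions-Simon argument used there. For Theorem \ref{prop2}, $\tilde U$ enters the energy estimate only through $\int_D\mathbf j^\top\nabla_x\tilde U\,dx$, which is controlled by $\bar U_g \bar\rho^{1/2}||\rho^{-1/2}\mathbf j||_{L^2(D;\mathbb R^3)}$ just as $U$ was, so the same differential inequality for $\sigma$ and the same choice $\sigma(t)=\sigma_0+\exp(-b(t-t_0))$ close the argument. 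For Theorem \ref{prop3}, the Lipschitz field $\mathbf i$ solves the momentum equation with the same $\tilde U$, hence the $\nabla_x\tilde U$ contributions arising from the weak energy inequality and from the evolution equation for $\tilde{\mathbf u}=\rho^{-1}\mathbf i$ cancel exactly in the relative-energy computation, leaving $\tilde{\mathbf R}$ unchanged; Gr\"onwall-Bellman then forces $\mathbf j=\mathbf i$ almost everywhere.

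The most delicate point is the temporal regularity of $\tilde U$. Since $f_i$ is only piecewise continuous, $\upsilon_i$ has derivative jumps at a finite set of times, so $\partial_t\tilde U$ may be discontinuous there. This is not a genuine obstruction: the Aubin-Lions-Simon step for $Q$-continuity of $\mathbf M$ only requires $\partial_t\mathbf w\in L^2((t_0,t_f);L^2(D;\mathbb R^3))$, and one concatenates the estimates across the finitely many continuity intervals of $f_i$. A secondary remark is that the leader trajectories depend on the chosen $\rho$, so Theorem \ref{prop3} must be read as: for each admissible $\rho$ and resulting $\{\xi_i\}_{i=1}^k$, the hypothesized Lipschitz momentum $\mathbf i$ coincides with every bounded-energy weak solution built from the same $\rho$.
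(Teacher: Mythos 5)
Your proposal is correct and takes essentially the same approach as the paper: both exploit the one-way coupling created by the free choice of $\rho$, so the leader ODEs can be solved first to produce $\mathbf{v}$-independent trajectories $\xi_i(t)$, which makes $U(\cdot;\{\xi_i[\mathbf v]\})$ trivially $Q$-continuous and lets the three existence/uniqueness theorems go through with the time-varying confinement potential. The paper's own proof is a terse sketch; you flesh out details it glosses over (Carath\'eodory ODE theory for the $PWC$ controls, the $L^2$-in-time sufficiency in the Aubin--Lions--Simon step across the finitely many continuity intervals of $f_i$, and the pointwise cancellation of the $\nabla_x U$ terms in the relative-energy computation), and your closing remark about how Theorem~\ref{prop3} should be read for the coupled system — uniqueness only within the class of bounded-energy solutions built from the \emph{same} $\rho$ and hence the same leader trajectories — is a useful clarification that the paper leaves implicit.
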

\begin{proof}(Sketch.) Let $\mathbf x_i[\mathbf v] = \xi_i(\cdot)$. $\mathbf x_i$ is constant w.r.t. $\mathbf v$, and is obviously $Q$-continuous, as is $U(\cdot;\{\mathbf x_i[\mathbf v] \}_{i=1}^k)$. Now, construct using the new structure of $U$ the operator $\mathbf M[\cdot]$ using the Dirichlet problem for the given elliptic system (\ref{elliptic}). The operator $\mathbf M$ is still $Q$-continuous. So, the result of Theorem \ref{prop1} follows. The proof of Theorem \ref{prop2} need not be modified significantly, nor does the proof of Theorem \ref{prop3}.
\end{proof}
Another result of the flexibility we have in these methods w.r.t. constructing momentum fields for densities of our choosing is the following result on reachability for the density:
\begin{corollary}
	Under the assumptions of the previous result, there are for every 
	$r \in H^2(D;\mathbb R) \cap \text{Lip}(\text{cl }D;\mathbb R) \text{ s.t. }r > 0 \text{ in } D$ controls 
	$f_i \in PWC([t_0,t_f];\mathbb R^3)$ s.t. $\rho(t_f,\cdot) = r$.
\end{corollary}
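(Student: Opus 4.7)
The plan is to exploit the enormous freedom we have in selecting the spatial density in the weak-solution construction of Theorem \ref{prop1} and its leader-augmented version: in that scheme $\rho$ is chosen \emph{first} (essentially as a free parameter), and the leader trajectories only enter the subsequent convex integration through the potential $U$, which is absorbed into the operator $\mathbf M$. The controls $f_i$ therefore do not constrain the choice of $\rho$ at all; their only role is to ensure a well-posed leader ODE compatible with the invariance hypothesis.

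First, without loss of generality I would assume $r$ has unit mass on $D$ (otherwise rescale, noting that both $\rho_0$ and $r$ are positive and the continuity equation preserves mass). Then I would build a smooth time-interpolant between $\rho_0$ and $r$ of the required regularity. A natural choice is
$$
\rho(t,\cdot) := \phi\!\left(\tfrac{t-t_0}{t_f - t_0}\right)\rho_0 + \left(1 - \phi\!\left(\tfrac{t-t_0}{t_f - t_0}\right)\right) r,
$$
where $\phi \in C^3([0,1];[0,1])$ satisfies $\phi(0)=1$, $\phi(1)=0$, with sufficiently many derivatives vanishing at the endpoints to match the initial data smoothly. Since $\rho_0, r \in H^2(D;\mathbb R)\cap \text{Lip}(\text{cl }D;\mathbb R)$ are strictly positive on $D$ and both have unit mass, the convex combination inherits all these properties for every $t\in [t_0,t_f]$; in particular $\rho \in C^3([t_0,t_f];H^2(D;\mathbb R))\cap\text{Lip}([t_0,t_f]\times\text{cl }D;\mathbb R)$, $\rho>0$, $\int_D \rho(t,\cdot)\,dx=1$, and by construction $\rho(t_0,\cdot)=\rho_0$, $\rho(t_f,\cdot)=r$.

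Second, I would select any piecewise continuous controls $f_i$ (for instance $f_i\equiv 0$) which, combined with the invariance assumption on $(\upsilon,\mathbf F)$ over $\tilde D \times B(0,s)$, give a unique classical trajectory $(\xi_i,\upsilon_i)$ confined to $\tilde D \times B(0,s)$ for the given choice of $\rho$. Standard ODE existence applies because $\mathbf F$ is smooth in its arguments and $\rho \in C([t_0,t_f]\times D;\mathbb R)$ is continuous in time and space. Third, with $\rho$ and the leader trajectories $\{\xi_i\}_{i=1}^k$ now fixed, I invoke the leader version of Theorem \ref{prop1} (the previous theorem of this subsection) to produce a momentum field $\mathbf j$ such that $(\rho,\mathbf j)$, together with the leader ODEs driven by $f_i$, is a weak solution of the coupled system (\ref{ode_leader}--\ref{euler_leader}). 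Since that construction never perturbs the preassigned density, the terminal identity $\rho(t_f,\cdot)=r$ is preserved.

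The only mild obstacle is verifying that the interpolant simultaneously meets all three requirements (strict positivity, $C^3$ in time, $H^2\cap\text{Lip}$ in space) while remaining compatible with the leader ODE; but these decouple cleanly, since the convex combination handles spatial regularity and positivity, the choice of $\phi$ handles temporal regularity, and the invariance hypothesis handles the leader trajectory independently of $\rho$. Hence the conclusion follows immediately from the flexibility already built into Theorem \ref{prop1} and its leader generalization.
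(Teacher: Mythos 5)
Your proof is correct and follows the paper's own route: the paper's (terse) argument is precisely to pick arbitrary controls, impose the terminal constraint $\rho(t_f,\cdot)=r$ when choosing the free density of the required class in Theorem \ref{prop1}, and then run the Helmholtz decomposition and convex-integration machinery unchanged, which is exactly what you make explicit via the $\phi$-interpolant. One small caution: the ``otherwise rescale'' aside is not genuinely without loss of generality, since mass is conserved by the continuity equation, so $\int_D r\,dx = 1$ is an implicit hypothesis of the corollary rather than something you can normalize away without changing the target $r$; this is a gap in the corollary's statement (which the paper also does not flag), not a defect of your construction.
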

\begin{proof}(Sketch.) Pick some controls $f_i$. When we select the densities $$\rho \in 
	C^3([t_0,t_f];H^2(D;\mathbb R)) \cap \text{Lip}([t_0,t_f] \times \text{cl }D;\mathbb R), \rho > 0, \rho(t_0,\cdot) = \rho_0$$ 
	in Theorem \ref{prop1}, simply include the additional 
	constraint $\rho(t_f,\cdot) = r$. Then, perform the Helmholtz decomposition on $\mathbf j$, and apply the previous results.
\end{proof}
\subsection{Bessel Potentials}
We also have the following generalization to Bessel potentials:
\begin{theorem}
	\label{bessel_pot}
	Suppose $\Lambda_\alpha$ is of form:
	\[
	\Lambda_\alpha = \frac{1}{k_\alpha}
	(I - \nabla_x^2)^{\lambda_\alpha/2} \Leftrightarrow \mathcal L_\alpha = k_\alpha(I - \nabla_x^2)^{-\lambda_\alpha/2}
	\]
	with $\lambda_\alpha \geq 2$.
	If the other assumptions of Theorems \ref{prop1}, \ref{prop2}, \ref{prop3} hold, then their conclusions hold for this choice of operator.
\end{theorem}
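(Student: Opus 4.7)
The plan is to re-examine the three theorems \ref{prop1}, \ref{prop2}, and \ref{prop3} and verify that each of the ingredients used about the Yukawa potentials (regularity of $\pi_\rho, \pi_{\mathbf j}, v_c, v_r$, boundedness, $Q$-continuity, convolution inequalities, equivalence of norms weighted by $\rho$) has a Bessel-potential analogue that is at least as strong. The bulk of the work is to establish the proper substitute for Lemma \ref{reg_pots}, after which the three theorems follow along the same lines with only cosmetic changes to constants.

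First I would replace Lemma \ref{reg_pots} with its Bessel version: the operator $\mathcal L_\alpha = k_\alpha(I - \nabla_x^2)^{-\lambda_\alpha/2}$ is, by definition, an isomorphism $L^p(\mathbb R^3) \to H^{\lambda_\alpha, p}(\mathbb R^3)$ for every $p \in (1,\infty)$, where $H^{s,p}$ denotes the (inhomogeneous) Bessel potential space. Since $\rho(t,\cdot), \mathbf j(t,\cdot) \in L^\infty(D)$ are compactly supported in $\mathbb R^3$ after the zero extension, $\mathbb I_D \rho(t,\cdot)$ and $\mathbb I_D \mathbf j(t,\cdot)$ lie in $L^p(\mathbb R^3)$ for all $p \in [1,\infty]$. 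By the Sobolev embedding $H^{s,p}(\mathbb R^3) \hookrightarrow C^{k,\beta}(\mathbb R^3)$ whenever $s - 3/p > k + \beta$, taking $p$ large and $\lambda_\alpha \geq 2$ yields $\pi_\rho[\rho](t,\cdot), \pi_{\mathbf j}[\mathbf j](t,\cdot), v_c[\rho](t,\cdot), v_r[\rho](t,\cdot) \in C^{1,\beta}(D;\mathbb R)$ (and $C^{2,\beta}$ as soon as $\lambda_\alpha > 2$). This reproduces, and in fact improves upon, the pointwise and H\"older regularity that was used from Lemma \ref{reg_pots}.

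Next I would verify the $Q$-continuity requirements for the operators $\pi_\rho, \pi_{\mathbf j}, v_c, v_r$ when built from the Bessel convolution $G_{\lambda_\alpha}$ in place of the Yukawa kernel. Continuity on bounded sets of $L^\infty((t_0,t_f)\times D;\mathbb R^3)$ into $C_b(Q)$ follows from the isomorphism property combined with the Sobolev embedding above; weak$^{*}$-to-uniform continuity follows because, against fixed $G_{\lambda_\alpha}(x-\cdot) \in L^1(\mathbb R^3)$ for each $x$, convergence of sequences $\mathbf w_m \to \mathbf w$ in $C_w([t_0,t_f];L^2(D;\mathbb R^3))$ and weakly-$*$ in $L^\infty$ passes to uniform convergence of their convolutions on the bounded set $Q$, using the fact that $\{G_{\lambda_\alpha}(x-\cdot)\}_{x\in D}$ is a compact family in $L^1$ (exponential decay at infinity and uniform integrable singularity at the origin). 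Causality (condition (3) of the $Q$-continuity definition) is immediate since all operators act pointwise in $t$.

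With the regularity and $Q$-continuity in hand, the proof of Theorem \ref{prop1} is unchanged: one still picks $\rho$ in the same class, Helmholtz-decomposes $\mathbf j = \mathbf v + \nabla_x \Phi[\mathbf v]$, solves the elliptic system (\ref{elliptic}) for $\mathbf M[\mathbf v]$ with source $\mathbf S$ in which the potentials are now Bessel potentials but still lie in $L^p(D)$ for every $p$, and invokes Theorem \ref{existence_fundamental}. For Theorem \ref{prop2}, the only quantitative change is that the constants $C(k_a,\lambda_a,\rho,\tilde\rho,\bar\rho)$ appearing in the energy estimate become $C(k_a,\lambda_a,\rho,\tilde\rho,\bar\rho, G_{\lambda_\alpha})$ coming from $\|G_{\lambda_\alpha}\|_{L^1(\mathbb R^3)}$ (finite since Bessel kernels decay exponentially); the selection of $\sigma[\mathbf v]$ and the rest of the argument go through verbatim. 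For Theorem \ref{prop3}, the relative-energy computation only uses $L^\infty$ bounds on $\pi_\rho[\rho]$ and Young's convolution inequality on $\pi_{\mathbf j}[\rho(\tilde{\mathbf u} - \mathbf u)]$, both of which hold with $G_{\lambda_\alpha}$ in place of the Yukawa kernel. The main obstacle I anticipate, and the one most worth checking carefully, is the weak$^{*}$-to-uniform continuity of the Bessel-potential operators on $Q$, since a naive estimate only gives weak-to-weak continuity; the compactness argument for the kernel family $\{G_{\lambda_\alpha}(x-\cdot)\}$ in $L^1$, together with the bounded support of $\rho,\mathbf j$, is what rescues the $Q$-continuity property and closes the argument.
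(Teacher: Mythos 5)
Your proposal takes essentially the same approach as the paper's sketch: replace Lemma \ref{reg_pots} by the mapping property of the Bessel potential operator $\mathcal L_\alpha : L^p(\mathbb R^3) \to B^{\lambda_\alpha,p}(\mathbb R^3)$, then use Sobolev embeddings to recover H\"older regularity of the potentials, and finally re-run Theorems \ref{prop1}--\ref{prop3} with the new potentials. You actually supply more detail than the paper on the one nontrivial point it leaves implicit, namely the $Q$-continuity (in particular the weak-$*$-to-uniform continuity, which you handle via translation-continuity of the Bessel kernel in $L^1$ and an Arzel\`a--Ascoli type argument); the paper instead exploits the extra a~priori regularity $\rho(t,\cdot)\in H^2(D)$ to place $\pi_\rho[\rho],v_r[\rho],v_c[\rho]$ in $B^{\lambda_\alpha+2,2}(D)\subset C^{2,1/2}(D)$, whereas you only use $\rho\in L^\infty$ and therefore land in $C^{1,\beta}$ for $\lambda_\alpha=2$ (and $C^{2,\beta}$ for $\lambda_\alpha>2$), which is still enough to feed the source term $\mathbf S$ into the elliptic system for $\mathbf M$. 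Both routes are sound and differ only in how much regularity they extract; your $Q$-continuity argument is a genuine improvement in explicitness over the paper's terse "apply the arguments of each theorem modified slightly."
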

\begin{proof} (Sketch.) The sketch of the proof describes how to modify the arguments of Theorems \ref{prop1}, \ref{prop2}, and \ref{prop3} to suit the new choice of non-local terms.
	
	For $\lambda_\alpha = 2$, the non-local operator given above is the same type as the original choice, so we focus on $\lambda_\alpha > 2$. To start the arguments, as we've done before, pick arbitrary 
	\[
	\rho \in C^3([t_0,t_f];H^2(D;\mathbb R))\cap \text{Lip}([t_0,t_f] \times \text{cl }D;\mathbb R), \rho > 0, \rho(t_0,\cdot) = \rho_0.
	\]
	Define the Bessel potential spaces \cite{adams2012function} for $s > 0, 1 < p < \infty$:
	\[
	B^{s,p}(\mathbb R^3;\mathbb R):= \{f \in \mathcal S'(\mathbb R^3;\mathbb R ): \mathfrak F^{-1} \Big [(1 + \xi^\top \xi )^{s/2} \mathfrak F[f] \Big ] \in L^p(\mathbb R^3;\mathbb R) \}
	\]
	with norm:
	\[
	||f||_{B^{s,p}(\mathbb R^3;\mathbb R)} := ||\mathfrak F^{-1} \Big [(1 + \xi^\top \xi )^{s/2} \mathfrak F [f] \Big ]||_{L^p(\mathbb R^3;\mathbb R)}.
	\]
	We have used $\mathfrak F [f]$ to be the Fourier transform of $f$, and $\mathfrak F^{-1}[f]$ to be the inverse Fourier transform of $f$. See \cite{evans} to review their properties.
	These spaces $B^{s,p}(\mathbb R^3;\mathbb R)$ can be seen as interpolation spaces between the standard Sobolev spaces $W^{\text{ceil}(s),p}(\mathbb R^3;\mathbb R)$ and 
	$W^{\text{floor}(s),p}(\mathbb R^3;\mathbb R)$ for non-integer $s$, and if $s$ is an integer, $B^{s,p}(\mathbb R^3;\mathbb R)$ coincides with a standard Sobolev space under an equivalent norm \cite{adams2012function, chandler2017sobolev}.
	Consider this definition to extend in the usual way to vector fields. For bounded, open, smooth sets $D \subset \mathbb R^3$, we define using restrictions \cite{chandler2017sobolev}:
	\[
	B^{s,p}(D;\mathbb R):= \{f \in \mathcal S'(D;\mathbb R) : 
	\text{ } \exists \text{ } g \in B^{s,p}(\mathbb R^3;\mathbb R)\text{ s.t. } f  = g \big |_{D} \}, 
	\]
	with norm
	\[
	||f||_{B^{s,p}(D;\mathbb R)}:= \inf_{g \in B^{s,p}(\mathbb R^3;\mathbb R), g |_{D} = f} ||g||_{B^{s,p}(\mathbb R^3;\mathbb R)}.
	\]
	In particular, if $v \in L^\infty(D;\mathbb R)$, 
	\[
	\mathcal L_{\alpha} \mathbb I_D v = k_\alpha G_{\lambda_{\alpha}}(\cdot)*\mathbb I_D v \in B^{\lambda_{\alpha}, p}(D;\mathbb R) \text{ for each } 1 < p < \infty,
	\]
	where $G_{\lambda_{\alpha}}$ is the Bessel  kernel \cite{aronszajn1961theory}:
	\[
	G_{\lambda_\alpha}(x):=\frac{1}{2^{\frac{3+\lambda_\alpha - 2}{2} 
		}\pi^{3/2} \Gamma(\frac{\lambda_\alpha}{2})}K_{\frac{3-\lambda_\alpha}{2}}(||x||_{\mathbb R^3})||x||_{\mathbb R^3}^{\frac{\lambda_\alpha-3}{2}}
	\]  
	of order $\lambda_\alpha > 0$, and $K_\nu$ is the modified Bessel function of second kind.
	We have already selected $\rho$, so consider $\mathbf j \in X$, $\mathbf j(t_0,\cdot) = \mathbf j_0$, not necessarily a weak solution to (\ref{euler2}) with the new interactions. $(\rho, \mathbf j)^\top$ is regular enough so that for $\lambda_a, \lambda_c, \lambda_r$ there are embeddings (after first embedding into 
	$W^{\text{floor}(\lambda_\alpha),p}(\cdots)$):
	\[
	\pi_{\mathbf j}[\mathbf j](t,\cdot) \in B^{\lambda_a,p}(D;\mathbb R^3) \subset 
	C^{1,a}(D;\mathbb R^3) \text{ if } \mathbf j \in X, t \in (t_0,t_f)
	\]
	and
	\begin{equation*}
		\begin{split}
			\pi_{\rho}[\rho](t,\cdot), v_r[\rho]&(t,\cdot), v_c[\rho](t,\cdot)  \in ... \\  & B^{\min\{\lambda_a, \lambda_c, \lambda_r\}+2,2}(D;\mathbb R^2) \subset 
			C^{2,1/2}(D;\mathbb R^2), t \in (t_0,t_f)
		\end{split}
	\end{equation*}
	for some $2 < p < \infty, 0 \leq a < 1$. Simply apply the arguments of Theorems \ref{prop1}, \ref{prop2}, \ref{prop3}  modified slightly for the new potentials and their conclusions follow.
\end{proof}
\section{Conclusion}
\label{section5}
In this work, we studied a (formal) hydrodynamic limit of a generalized Cucker-Smale model. The novelties of these problems compared to \cite{weak_sols}, for example, are:
\begin{enumerate}
	\item A bounded domain as opposed to the torus;
	\item Singular alignment, repulsion, and cohesive potentials, in particular the Yuka\-wa potential, a type of Bessel potential;
	\item Addition of a confinement potential;
	\item Incorporation of leaders and sparse control; and
	\item A generalization to Bessel potentials of arbitrary order.
\end{enumerate}
In particular, unique Lipschitz densities are admitted by the Euler alignment system, and solutions with Lipschitz momenta are unique in the class of bounded-energy weak solutions. Assuming Lipschitz solutions exist for arbitrarily long times, we may use these conclusions \textit{a priori} in system identification problems and sparse (optimal) control problems for biological and engineered swarms, respectively.

\bibliographystyle{siam}  
\bibliography{references.bib}
\end{document}